\DeclareMathOperator{\Hom}{Hom}
\DeclareMathOperator{\Bil}{Bil}
\DeclareMathOperator{\Inf}{Inf}
\DeclareMathOperator{\Tra}{Tra}
\DeclareMathOperator{\Res}{Res}
\DeclareMathOperator{\ima}{Im}
\newcommand{\F}{\mathbb{F}}
\newcommand{\E}{\varepsilon}
\newcommand{\U}{\omega}
\newcommand{\vp}{\varphi}
\newcommand{\LL}{L}
\newcommand{\cZ}{\mathcal{Z}}
\newcommand{\cB}{\mathcal{B}}
\newcommand{\cH}{\mathcal{H}}
\newtheorem{thm}{Theorem}[section]
\newtheorem{lem}[thm]{Lemma}
\newtheorem{cor}[thm]{Corollary}
\theoremstyle{definition}
\theoremstyle{remark}
\title{Multipliers and Unicentral Leibniz Algebras}
\author{Erik Mainellis}
\date{}
\begin{document}

\maketitle

\begin{abstract}
    This paper details the Leibniz generalization of Lie-theoretic results from Peggy Batten's 1993 dissertation. We first show that the multiplier of a Leibniz algebra is characterized by its second cohomology group with coefficients in the field. We then establish criteria for when the center of a cover maps onto the center of the algebra. Along the way, we obtain a collection of exact sequences and a brief theory of unicentral Leibniz algebras.
\end{abstract}

\section{Introduction}
In \cite{batten}, Peggy Batten established Lie analogues of results concerning the multipliers and covers of groups. In the first chapter of \cite{batten}, she proves that covers of Lie algebras are unique, a result which deviates from the group case. Batten also characterizes the multiplier in terms of a free presentation. In \cite{rogers}, the author proves the Leibniz analogue of this first chapter.

The aim of the present paper is to generalize Chapters 3 and 4 in \cite{batten} to Leibniz algebras. Given a central ideal of a Leibniz algebra $L$ and a central $L$-module, we first construct a Hochschild-Serre type spectral sequence of low dimension. This sequence is used to characterize the multiplier $M(L)$ of $L$ in terms of the second cohomology group $\cH^2(L,\F)$. We then establish conditions for when the center of any cover maps onto the center of the algebra, i.e., for when $\U(Z(E)) = Z(L)$, where $E$ is a cover of $L$ and $\U:E\xrightarrow{} L$ is a surjective homomorphism. These conditions are the special case of a four-part equivalence theorem that highlights an extension-theoretic crossroads of unicentral algebras, multipliers and covers, free presentations, and the second cohomology group of Leibniz algebras.

\section{Preliminaries}
Let $\F$ be a field. A \textit{Leibniz algebra} $L$ is an $\F$-vector space equipped with a bilinear multiplication which satisfies the \textit{Leibniz identity} $x(yz) = (xy)z + y(xz)$ for all $x,y,z\in L$. Let $A$ and $B$ be Leibniz algebras. An \textit{extension} of $A$ by $B$ is a short exact sequence $0\xrightarrow{} A\xrightarrow{\sigma} L\xrightarrow{\pi} B\xrightarrow{} 0$ such that $\sigma$ and $\pi$ are homomorphisms and $L$ is a Leibniz algebra. One may assume that $\sigma$ is the identity map, and we make this assumption throughout. An extension is called \textit{central} if $A\subseteq Z(L)$. A \textit{section} of an extension is a linear map $\mu:B\xrightarrow{} L$ such that $\pi\circ \mu = \text{id}_B$.

For a Leibniz algebra $L$, a pair of Leibniz algebras $(K,M)$ is called a \textit{defining pair} for $\LL$ if $\LL\cong K/M$ and $M\subseteq Z(K)\cap K'$. We say such a pair is a \textit{maximal defining pair} if the dimension of $K$ is maximal. In this case, $K$ is called a \textit{cover} for $\LL$ and $M$ is called the \textit{multiplier} for $\LL$, denoted $M(\LL)$. It immediately follows that $M$ is a unique Lie algebra since it is abelian, justifying $M(L)$ as being \textit{the} multiplier of $L$. As in \cite{batten}, $C(L)$ is used to denote the set of all pairs $(J,\lambda)$ such that $\lambda:J\xrightarrow{} L$ is a surjective homomorphism and $\ker \lambda\subseteq J'\cap Z(J)$. An element $(T,\tau)\in C(L)$ is called a \textit{universal element} in $C(L)$ if, for any $(J,\lambda)\in C(L)$, there exists a homomorphism $\beta:T\xrightarrow{} J$ such that $\lambda\circ \beta = \tau$.

As shown in \cite{rogers}, many results from Chapter 1 of \cite{batten} carry over to the Leibniz setting with few significant differences. One natural change is to consider algebras of the form $FR + RF$ as a replacement to the usual Lie product algebra $[F,R]$, which ensures a two-sided ideal for Leibniz algebras. A pair of dimension bounds are also notably different from the Lie case, but still ensure that both $K$ and $M$ have finite dimension. We now state the Leibniz version of the culminating result from the first chapter of \cite{batten}, as proven in \cite{rogers}.

\begin{thm}\label{batten 1.12}
Let $L$ be a finite-dimensional Leibniz algebra and let $0\xrightarrow{} R\xrightarrow{} F\xrightarrow{} \LL\xrightarrow{} 0$ be a free presentation of $L$. Let \[B= \frac{R}{FR+RF} ~~~~~~ C = \frac{F}{FR+RF} ~~~~~~ D=\frac{F'\cap R}{FR+RF}\] Then:
\begin{enumerate}
    \item All covers of $\LL$ are isomorphic and have the form $C/E$ where $E$ is the complement to $D$ in $B$.
    \item The multiplier $M(L)$ of $L$ is $D \cong B/E$.
    \item The universal elements in $C(L)$ are the elements $(K,\lambda)$ where $K$ is a cover of $L$.
\end{enumerate} 
\end{thm}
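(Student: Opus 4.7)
The plan is to construct an explicit defining pair from the given free presentation, then leverage the freeness of $F$ to obtain both the uniqueness of covers and the universal property. Set $C = F/(FR+RF)$ with distinguished subspace $B = R/(FR+RF)$. The first observation is that $B \subseteq Z(C)$, since every product involving an element of $R$ has been quotiented out. Thus $(C, B)$ already presents $L$ as a central quotient $L \cong C/B$, but $B$ need not lie in $C'$, so $(C,B)$ is not yet a defining pair.

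To remedy this, compute $C' \cap B$. Since $FR + RF \subseteq F' \cap R$, this intersection equals $D = (F' \cap R)/(FR+RF)$. Fix a vector-space complement $E$ of $D$ in $B$; then $(C/E, B/E)$ is a defining pair for $L$: the quotient by $B/E$ is $L$, the subspace $B/E$ is central (inherited from $B \subseteq Z(C)$), and the modular law gives $(C/E)' \cap (B/E) = ((C' \cap B) + E)/E = (D + E)/E = B/E$, so $B/E \subseteq (C/E)'$. Moreover $B/E \cong D$.

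The main content is proving that these $C/E$ are precisely the covers of $L$. Given any defining pair $(K, M)$, freeness of $F$ produces a lift $\vp\colon F \to K$ of the surjection onto $L$. Since $M \subseteq Z(K)$, one has $\vp(FR + RF) \subseteq KM + MK = 0$, so $\vp$ descends to $\bar\vp\colon C \to K$ sending $B$ into $M$. A short argument using $M \subseteq K'$ together with surjectivity onto $L$ shows that $\bar\vp$ is surjective with kernel $E_K \subseteq B$ satisfying $D + E_K = B$ (the latter because $M \subseteq K'$ forces every element of $B$ to lie in $C' + E_K$). A dimension count then yields $\dim K \leq \dim C - \dim B + \dim D$, with equality exactly when $E_K$ is a complement of $D$ in $B$; in that case $M \cong D$. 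This simultaneously establishes parts (1) and (2), and uniqueness up to isomorphism follows by running the construction between two covers and checking that the composed lift is forced to be an isomorphism by dimension.

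For part (3), an analogous lift builds $\bar\vp\colon C \to J$ for any $(J, \lambda) \in C(L)$, and one can modify $\bar\vp$ on the complement $E$, using $E \cap C' = 0$, to obtain a homomorphism $C/E \to J$ realizing the required $\beta$ with $\lambda \circ \beta = \tau$. The main obstacle I anticipate is the surjectivity-and-kernel analysis in the third paragraph, where one must track simultaneously the images of $C$, $B$, and $D$ under $\bar\vp$ while exploiting the central subspace $M$ and the containment $M \subseteq K'$. A subtler but crucial technical point specific to the Leibniz setting, flagged in the excerpt, is the use of $FR + RF$ rather than $[F,R]$: this ensures that $FR + RF$ is a two-sided ideal of $F$, so that the quotient $C$ and the centrality of $B$ are well-defined in the Leibniz category, and is the ingredient that keeps the argument parallel to Batten's Lie-theoretic proof.
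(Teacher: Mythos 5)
This theorem is not proved in the paper at all: it is quoted verbatim as ``the Leibniz version of the culminating result from the first chapter of \cite{batten}, as proven in \cite{rogers}.'' So there is no in-paper proof to compare against; your proposal has to be judged as a reconstruction of the Batten--Rogers argument, which in outline it is. The parts that check out cleanly: $B\subseteq Z(C)$, the computation $C'\cap B=D$ (via $FR+RF\subseteq F'\cap R$ and the modular law), the verification that $(C/E,B/E)$ is a defining pair with $B/E\cong D$, the lift $F\to K$ using freeness, the descent to $\bar\vp\colon C\to K$ because $\vp(FR+RF)\subseteq KM+MK=0$, surjectivity from $M\subseteq K'=\vp(F')$, and the identity $B=D+E_K$ from $M\subseteq K'$. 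These are all correct and are exactly the steps the cited source uses.

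Two points carry real content that your sketch glosses over. First, the dimension count $\dim K\le\dim C-\dim B+\dim D$ is only meaningful if these quantities are finite, and $C$ and $B$ are quotients of the infinite-dimensional free algebra $F$; the honest inequality is $\dim K=\dim L+\dim(B/E_K)\le\dim L+\dim D$, and the finiteness of $\dim D=\dim\bigl((F'\cap R)/(FR+RF)\bigr)$ is precisely one of the ``dimension bounds'' the paper says are nontrivial in the Leibniz case and must be established before ``maximal defining pair'' even makes sense. Second, ``the composed lift is forced to be an isomorphism by dimension'' understates the uniqueness step, which is where the Lie/Leibniz theory genuinely diverges from the group case (where covers are \emph{not} unique). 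To get a homomorphism between two covers commuting with the maps to $L$ you need the universal property of part (3) first, and that in turn rests on the decomposition $C=E\oplus W$ available because $E\subseteq Z(C)$ and $E\cap C'=0$ --- the ingredient you correctly flag for part (3) but whose logical priority over part (1) should be made explicit. With the order rearranged (finiteness of $D$, then part (3) for a fixed $C/E$, then uniqueness and parts (1)--(2)), your argument goes through and matches the source's strategy.
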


\section{Cohomology}\label{batten 3}
Consider a central extension $0\xrightarrow{} A\xrightarrow{} L\xrightarrow{} B\xrightarrow{} 0$ and section $\mu:B\xrightarrow{} L$. Define a bilinear form $f:B\times B\xrightarrow{} A$ by $f(i,j) = \mu(i)\mu(j) - \mu(ij)$ for $i,j\in B$. By our work in \cite{mainellis}, $f$ is a 2-cocycle of Leibniz algebras, meaning that $f(i,jk) = f(ij,k) + f(j,ik)$ for all $i,j,k\in B$. Moreover, the image of $f$ falls in $A$ by exactness. Since we are only working with central extensions for this section, we drop the trivial $\vp$ and $\vp'$ maps of central factor systems and let $\cZ^2(B,A)$ denote the set of all 2-cocycles. As usual, $\cB^2(B,A)$ is used to denote the set of all 2-coboundaries, i.e. 2-cocycles $f$ such that $f(i,j) = -\E(ij)$ for some linear transformation $\E:B\xrightarrow{} A$. We next recall that any elements $f$ and $g$ in $\cZ^2(B,A)$ belong to equivalent extensions if and only if they differ by a linear map $\E:B\xrightarrow{} A$, i.e. $f(i,j) - g(i,j) = -\E(ij)$ for all $i,j\in B$. In this case, we say $f$ and $g$ differ by a 2-coboundary. Therefore, extensions of $A$ by $B$ are equivalent if and only if they give rise to the same element of $\cH^2(B,A) = \cZ^2(B,A)/\cB^2(B,A)$, the \textit{second cohomology group} of $B$ with coefficients in $A$. Finally, the work in \cite{mainellis} guarantees that each element $\overline{f}\in \cH^2(B,A)$ gives rise to a central extension $0\xrightarrow{} A\xrightarrow{} L\xrightarrow{} B\xrightarrow{} 0$ with section $\mu$ such that $f(i,j) = \mu(i)\mu(j) - \mu(ij)$.

\subsection{Hochschild-Serre Spectral Sequence}
Let $H$ be a central ideal of a Leibniz algebra $L$ and let $0\xrightarrow{} H\xrightarrow{} L\xrightarrow{\beta} \LL/H\xrightarrow{} 0$ be the natural central extension with section $\mu$ of $\beta$. Let $A$ be a central $L$-module. The following theorem concerns a five-term cohomological sequence that we refer to as the Hochschild-Serre spectral sequence of low dimensions.

\begin{thm}\label{HS}
The sequence \[0\xrightarrow{} \Hom(\LL/H,A) \xrightarrow{\Inf_1} \Hom(\LL,A)\xrightarrow{\Res} \Hom(H,A) \xrightarrow{\Tra} \cH^2(\LL/H,A) \xrightarrow{\Inf_2} \cH^2(\LL,A)\] is exact.
\end{thm}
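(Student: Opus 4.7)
The plan is to verify exactness at each of the four interior nodes after fixing explicit formulas for the arrows. Inflation is composition with $\beta$: $\Inf_1(\vp) = \vp\circ\beta$ and $(\Inf_2[g])(a,b) = g(\beta(a),\beta(b))$. Restriction is $\Res(\psi) = \psi|_H$. The transgression $\Tra$ sends $\alpha\in\Hom(H,A)$ to the class of $g(x,y) := \alpha(f(x,y))$, where $f(x,y) := \mu(x)\mu(y) - \mu(xy)$ is the factor system of the central extension $0\to H\to L\to L/H\to 0$; the image of $f$ lies in $H$ by exactness, so $\alpha$ may be applied.

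Before exactness, I would show $\Tra$ is well defined. Using the Leibniz identity on $\mu(i),\mu(j),\mu(k)$ and repeatedly killing products involving $H$-valued terms via the centrality of $H$ in $L$, one obtains the 2-cocycle identity for $f$, and hence for $g = \alpha\circ f$. Replacing $\mu$ by another section $\mu + h$ with $h:L/H\to H$ linear shifts $f$ only by $-h(xy)$ (again because centrality kills every cross term), so $g$ changes only by a 2-coboundary. Exactness at $\Hom(L/H,A)$ follows from the surjectivity of $\beta$, and exactness at $\Hom(L,A)$ from the observation that any $\psi:L\to A$ vanishing on $H$ factors uniquely through $\beta$.

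Exactness at $\Hom(H,A)$ then splits in two. If $\alpha = \psi|_H$, then $g(x,y) = \psi(f(x,y)) = \psi(\mu(x)\mu(y)) - \psi(\mu(xy)) = -\psi(\mu(xy))$, the first term dropping because $A$ is abelian, so $g$ is a 2-coboundary witnessed by $\E = \psi\circ\mu$. Conversely, if $\alpha(f(x,y)) = -\E(xy)$ for some linear $\E:L/H\to A$, define $\psi:L\to A$ on the direct-sum decomposition $L = \mu(L/H)\oplus H$ by $\psi(\mu(x)+h) := \E(x) + \alpha(h)$. Then $\psi|_H = \alpha$ and, for $a = \mu(x)+h$ and $b = \mu(y)+k$, centrality gives $ab = \mu(x)\mu(y) = \mu(xy)+f(x,y)$, so $\psi(ab) = \E(xy) + \alpha(f(x,y)) = 0 = \psi(a)\psi(b)$, making $\psi$ a Leibniz homomorphism.

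Exactness at $\cH^2(L/H,A)$ is the central step and the main obstacle, since one must lift cohomological information from $L/H$ back to $L$ along an arbitrary linear splitting. For $\Inf_2\Tra(\alpha) = 0$, the formula $\E(a) := -\alpha(a - \mu\beta(a))$ defines a linear map $L\to A$ (the argument lies in $H$), and centrality forces $ab = \mu\beta(a)\mu\beta(b)$, giving $(\Inf_2 g)(a,b) = -\E(ab)$. For the reverse implication, suppose $[g]$ has $(\Inf_2 g)(a,b) = -\E'(ab)$ for some linear $\E':L\to A$, and set $\alpha := -\E'|_H$ (a Leibniz homomorphism because $H$ and $A$ are both abelian). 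Comparing $g(x,y) = -\E'(\mu(x)\mu(y))$ with $\Tra(\alpha)(x,y) = -\E'(\mu(x)\mu(y)) + \E'(\mu(xy))$ yields $g - \Tra(\alpha) = -\E''(xy)$ where $\E''(x) := \E'(\mu(x))$, so $[g] = \Tra(\alpha)$. Throughout, the only subtleties are sign tracking and the repeated appeal to the centrality of $H$ to kill cross terms.
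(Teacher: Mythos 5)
Your proposal is correct and follows essentially the same route as the paper: identical definitions of the four maps, the same coboundary witness $\psi\circ\mu$ for $\ima(\Res)\subseteq\ker(\Tra)$, the same extension $\psi(\mu(x)+h)=\E(x)+\alpha(h)$ for the converse, and the same use of the decomposition $a=\mu\beta(a)+h_a$ with $\E(a)=-\alpha(h_a)$ at the $\cH^2(L/H,A)$ stage. The sign bookkeeping and the appeals to centrality all match the paper's argument.
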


Before proving exactness, we need to define the maps of this sequence and check that they make sense. The first \textit{inflation map} $\Inf_1:\Hom(\LL/H,A)\xrightarrow{} \Hom(\LL,A)$ is defined by $\Inf_1(\chi) = \chi\circ \beta$ for any homomorphism $\chi:\LL/H\xrightarrow{} A$. Next, the \textit{restriction mapping} $\Res:\Hom(\LL,A)\xrightarrow{} \Hom(H,A)$ is defined by $\Res(\pi) = \pi\circ \iota$ where $\iota:H\xrightarrow{} \LL$ is the inclusion map. It is readily verified that $\Inf_1$ and $\Res$ are well-defined and linear.

Third is the \textit{transgression map} $\Tra:\Hom(H,A)\xrightarrow{} \cH^2(\LL/H,A)$. Let $f:L/H\times L/H\xrightarrow{} H$ be defined by $f(\overline{x}, \overline{y}) = \mu(\overline{x})\mu(\overline{y}) - \mu(\overline{x}\overline{y})$ and consider $\chi\in \Hom(H,A)$. Then $\chi\circ f\in \cZ^2(\LL/H,A)$ since $\chi\circ f(\overline{x}, \overline{yz}) - \chi\circ f(\overline{xy}, \overline{z}) - \chi\circ f(\overline{y}, \overline{xz}) = \chi(0) = 0$ for all $x,y,z\in \LL$. If $\nu$ is another section of $\beta$, let $g(\overline{x},\overline{y}) = \nu(\overline{x})\nu(\overline{y}) - \nu(\overline{x}\overline{y})$. Then $f$ and $g$ are cohomologous in $\cH^2(\LL/H,H)$, which implies that there exists a linear transformation $\E:\LL/H\xrightarrow{} H$ such that $f(\overline{x}, \overline{y}) - g(\overline{x}, \overline{y}) = -\E(\overline{x}\overline{y})$. Clearly $\chi\circ \E:\LL/H\xrightarrow{} A$ is also a linear transformation, and therefore $\chi\circ f$ and $\chi\circ g$ are cohomologous in $\cH^2(\LL/H,A)$. Letting $\Tra(\chi) = \overline{\chi\circ f}$, we have shown that $\Tra$ is well-defined. It is straightforward to verify that $\Tra$ is linear.

Finally, let $\Inf_2:\cH^2(\LL/H,A) \xrightarrow{} \cH^2(\LL,A)$ be defined by $\Inf_2(f+ \cB^2(\LL/H,A)) = f' + \cB^2(\LL,A)$, where $f'(x,y) = f(\beta(x),\beta(y))$ for $x,y\in \LL$ and $f\in \cZ^2(\LL/H,A)$. It is straightforward to verify that $\Inf_2$ is linear. To check that $\Inf_2$ maps cocycles to cocycles, one computes \begin{align*}
    0 &= f(\beta(x),\beta(y)\beta(z)) - f(\beta(x)\beta(y),\beta(z)) - f(\beta(y),\beta(x)\beta(z)) \\ &= f'(x,yz) - f'(xy,z) - f'(y,xz)
\end{align*} for all $x,y,z\in \LL$ since $f$ is a 2-cocycle. Hence $f'\in \cZ^2(\LL,A)$. To check that $\Inf_2$ maps coboundaries to coboundaries, suppose $f\in \cB^2(\LL/H,A)$. Then there exists a linear transformation $\E:\LL/H\xrightarrow{} A$ such that $f(\overline{x},\overline{y}) = -\E(\overline{xy})$ for $x,y\in \LL$. Note that $\beta(x) = x+H = \overline{x}$ for any $x\in\LL$. Therefore $f'(x,y) = f(\beta(x),\beta(y)) = -\E\circ \beta(xy)$, yielding $f'\in \cB^2(\LL,A)$.

\begin{proof}
Once again, we are concerned with the central extension $0\xrightarrow{} H\xrightarrow{} \LL\xrightarrow{\beta} \LL/H\xrightarrow{} 0$, a section $\mu$ of $\beta$, and a central $\LL$-module $A$. One has $f\in \cZ^2(\LL/H,H)$ for $f(\overline{x}, \overline{y}) = \mu(\overline{x})\mu(\overline{y}) - \mu(\overline{x}\overline{y})$. To show exactness at $\Hom(\LL/H,A)$, it suffices to show that $\Inf_1$ is injective. Suppose $\Inf_1(\chi) = 0$ for $\chi\in \Hom(\LL/H,A)$. Then $\chi\circ \beta(x)=0$ for all $x\in \LL$, which means that $\chi=0$ since $\beta$ is surjective.

To prove exactness at $\Hom(L,A)$, first consider an element $\chi\in \Hom(\LL/H,A)$. One computes $\Res(\Inf_1(\chi)) = \Res(\chi\circ \beta) = \chi\circ \beta\circ \iota = 0$ since $\iota$ includes $H$ into $\LL$ and $\beta$ sends elements of $H$ to zero in $\LL/H$. Thus $\ima(\Inf_1)\subseteq \ker(\Res)$. Conversely, consider an element $\chi\in\ker(\Res)$. Then $\chi\circ \iota = 0$ implies that $H\subseteq\ker(\chi)$. By the fundamental theorem of homomorphisms, there exists $\hat{\chi}\in \Hom(\LL/H,A)$ such that $\hat{\chi}\circ \beta = \chi$. But $\Inf_1(\hat{\chi}) = \hat{\chi}\circ \beta = \chi$. Hence $\ker(\Res)\subseteq \ima(\Inf_1)$.

To show exactness at $\Hom(H,A)$, first consider a map $\chi\in\Hom(\LL,A)$. Then \begin{align*}
    \chi\circ f(\overline{x},\overline{y}) &= \chi\circ \mu(\overline{x})\chi\circ\mu(\overline{y}) - \chi\circ \mu(\overline{x}\overline{y}) \\ &= -\chi\circ\mu(\overline{xy})
\end{align*} by centrality, which implies that $\chi\circ f\in \cB^2(\LL/H,A)$. Thus $\Tra(\Res(\chi)) = \Tra(\chi\circ \iota) = \overline{\chi\circ \iota\circ f} = 0$ and so $\ima(\Res)\subseteq \ker(\Tra)$. Conversely, let $\theta\in \Hom(H,A)$ be such that $\Tra(\theta) = \overline{\theta\circ f} = 0$. Then $\theta\circ f \in \cB^2(\LL/H,A)$ which implies that there exists a linear transformation $\E:\LL/H\xrightarrow{} A$ such that $\theta\circ f(\overline{x}, \overline{y}) = -\E(\overline{xy})$. Let $x=\mu(\overline{x}) + h_x$ and $y=\mu(\overline{y}) +h_y$. Then $xy = \mu(\overline{xy}) + h_{xy} = \mu(\overline{x})\mu(\overline{y})$ implies that \begin{equation}\label{chi on H}
    \theta(h_{xy}) = \theta(\mu(\overline{x})\mu(\overline{y}) - \mu(\overline{x}\overline{y})) = \theta\circ f(\overline{x}, \overline{y}) = -\E(\overline{xy}).
\end{equation}
Now let $\sigma(x) = \theta(h_x) +\E(\overline{x})$. Since $\ima\sigma\subseteq A$, $\sigma(x)\sigma(y) = 0$ by centrality. By (\ref{chi on H}), $\sigma(xy) = \theta(h_{xy}) + \E(\overline{x}\overline{y}) = 0$.
Hence $\sigma\in\Hom(\LL,A)$ and $\sigma(h) = \theta(h) + \E(h+H) = \theta(h)$ for all $h\in H$, which means that $\Res(\sigma) = \theta$ and thus $\ker(\Tra)\subseteq \ima(\Res)$.

To show exactness at $\cH^2(\LL/H,A)$, first consider a map $\chi\in \Hom(H,A)$. Then $\Tra(\chi) = \overline{\chi\circ f}$ where, as before, $f(\overline{x},\overline{y}) = \mu(\overline{x})\mu(\overline{y}) - \mu(\overline{xy})$ and $\chi\circ f\in \cZ^2(\LL/H,A)$. By definition of $\Inf_2$, \[\Inf_2(\overline{\chi\circ f}) = \overline{(\chi\circ f)'}\] where $(\chi\circ f)'(x,y) = \chi\circ f(\overline{x},\overline{y})$. We want to show that $(\chi\circ f)'$ is a coboundary in $\cH^2(\LL,A)$. To this end, we once again consider $x= \mu(\overline{x})+ h_x$ and $y= \mu(\overline{y}) + h_y$ with product $xy = \mu(\overline{x})\mu(\overline{y}) = \mu(\overline{xy}) - h_{xy}$. Then $\chi\circ f(\overline{x},\overline{y}) = \chi(\mu(\overline{x})\mu(\overline{y}) - \mu(\overline{xy})) = \chi(h_{xy})$. Define $\E(x) = -\chi(h_x)$. Then $\E:\LL\xrightarrow{} A$ and is linear. One computes $\E(xy) = -\chi(h_{xy}) = -\chi\circ f(\overline{x},\overline{y}) = -(\chi\circ f)'(x,y)$ which implies that $(\chi\circ f)'\in \cB^2(\LL,A)$. Therefore $\overline{(\chi\circ f)'} = 0$ and we have $\ima(\Tra)\subseteq \ker(\Inf_2)$. Conversely, suppose $g\in \cZ^2(\LL/H,A)$ such that $\overline{g}\in \ker(\Inf_2)$. Then $g(\overline{x},\overline{y}) = g'(x,y) = -\E(xy)$ for some linear $\E:L\xrightarrow{} A$. Since $\E$ is linear, $\E\circ f\in \cZ^2(\LL/H,A)$. As before, let $x= \mu(\overline{x}) + h_x \in \LL$ with $xy = \mu(\overline{x})\mu(\overline{y})$ the product of two such elements. Then \begin{align*}
    g'(x,y) &= g(\overline{x},\overline{y}) \\&= -\E(\mu(\overline{x})\mu(\overline{y})) \\&= -\E\circ f(\overline{x},\overline{y}) - \E\circ\mu(\overline{xy})
\end{align*} where $\E\circ \mu:\LL/H\xrightarrow{} A$. Thus $\overline{g} = \overline{-\E\circ f} = -\Tra(\E)$ which implies that $\ker(\Inf_2)\subseteq \ima(\Tra)$.
\end{proof}

\subsection{Relation of Multipliers and Cohomology}
The objective of this section is to prove that the multiplier $M(L)$ of a finite-dimensional Leibniz algebra $L$ is isomorphic to $\cH^2(L,\F)$, where $\F$ is considered as a central $L$-module. The following results rely on our ability to invoke the Hochschild-Serre spectral sequence, and so the labor of Theorem \ref{HS} begins to pay off.

\begin{thm}\label{if tra surj}
Let $Z$ be a central ideal in $L$. Then $L'\cap Z$ is isomorphic to the image of $\Hom(Z,\F)$ under the transgression map. In particular, if $\Tra$ is surjective, then $L'\cap Z\cong \cH^2(L/Z,\F)$.
\end{thm}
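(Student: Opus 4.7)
The plan is to specialize the Hochschild--Serre sequence from Theorem \ref{HS} to the central ideal $H = Z$ and the trivial central $L$-module $A = \F$, which reads
\[0 \to \Hom(L/Z, \F) \xrightarrow{\Inf_1} \Hom(L, \F) \xrightarrow{\Res} \Hom(Z, \F) \xrightarrow{\Tra} \cH^2(L/Z, \F) \xrightarrow{\Inf_2} \cH^2(L, \F).\]
By exactness at $\Hom(Z, \F)$ combined with the first isomorphism theorem, $\ima(\Tra) \cong \Hom(Z, \F)/\ker(\Tra) = \Hom(Z, \F)/\ima(\Res)$. Thus the first claim reduces to exhibiting a vector-space isomorphism $\Hom(Z, \F)/\ima(\Res) \cong L' \cap Z$.

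I would next identify the relevant $\Hom$-spaces concretely. Since $Z \subseteq Z(L)$, the multiplication on $Z$ vanishes, so every linear map $\chi: Z \to \F$ is automatically a Leibniz homomorphism, giving $\Hom(Z, \F) = Z^*$. Analogously, a linear functional $\pi: L \to \F$ is a homomorphism precisely when it vanishes on $L'$, so $\Hom(L, \F)$ is the annihilator of $L'$ in $L^*$, and $\Res$ simply restricts such a functional to $Z$.

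The crux is to show that $\ima(\Res) = (L' \cap Z)^\circ$, the annihilator of $L' \cap Z$ inside $Z^*$. The inclusion $\ima(\Res) \subseteq (L' \cap Z)^\circ$ is immediate, since any $\chi \in \Hom(L, \F)$ already vanishes on $L'$. For the reverse inclusion, given $\phi \in Z^*$ with $\phi|_{L' \cap Z} = 0$, I would define a map on $L' + Z$ by $l' + z \mapsto \phi(z)$; this is well-defined precisely because $\phi$ kills $L' \cap Z$, and it vanishes on $L'$ while restricting to $\phi$ on $Z$. Extending by zero on any linear complement of $L' + Z$ in $L$ produces the required $\chi \in \Hom(L, \F)$.

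Finite-dimensionality of $L$ then yields $Z^* / (L' \cap Z)^\circ \cong (L' \cap Z)^* \cong L' \cap Z$, establishing the first claim. For the \emph{in particular}: surjectivity of $\Tra$ means $\ima(\Tra) = \cH^2(L/Z, \F)$, so the isomorphism becomes $L' \cap Z \cong \cH^2(L/Z, \F)$ as desired. I expect the main obstacle to be the extension argument identifying $\ima(\Res)$, since it is the only non-formal step and requires the splitting of $L$ into $L' + Z$ plus a complement; every other ingredient is either Theorem \ref{HS} itself or standard annihilator duality.
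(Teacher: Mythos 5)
Your proposal is correct and follows essentially the same route as the paper: both use exactness at $\Hom(Z,\F)$ to get $\ima(\Tra)\cong \Hom(Z,\F)/\ima(\Res)$, identify $\ima(\Res)$ with the annihilator of $L'\cap Z$ in $Z^*$ (the paper calls this set $J$ and shows it equals the kernel of restriction to $L'\cap Z$), and finish by duality. The only cosmetic difference is that you extend a functional killing $L'\cap Z$ directly from $L'+Z$ via a linear complement, whereas the paper routes the extension through the quotients $Z/(L'\cap Z)\cong (Z+L')/L'\hookrightarrow L/L'$; both are valid.
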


\begin{proof}
Let $0\xrightarrow{}Z\xrightarrow{} L\xrightarrow{} L/Z\xrightarrow{} 0$ be the natural exact sequence for a central ideal $Z$ in $L$. Then the sequence $\Hom(L,\F)\xrightarrow{\Res} \Hom(Z,\F)\xrightarrow{\Tra} \cH^2(L/Z,\F)$ is exact by Theorem \ref{HS}. Let $J$ denote the set of all homomorphisms $\chi:Z\xrightarrow{} \F$ such that $\chi$ can be extended to an element of $\Hom(L,\F)$. Then $J$ is precisely the image of the restriction map in $\Hom(Z,\F)$, which is equal to the kernel of the transgression map by exactness. This means that $\Hom(Z,\F)/J\cong \ima(\Tra)$ and thus it suffices to show that $\Hom(Z,\F)/J\cong L'\cap Z$.

Consider the natural restriction homomorphism $\Res_2:\Hom(Z,\F)\xrightarrow{} \Hom(L'\cap Z,\F)$. Since $Z$ and $L'\cap Z$ are both abelian, $\Res_2$ is surjective and $\Hom(L'\cap Z,\F)$ is the dual space of $L'\cap Z$. Therefore \[\frac{\Hom(Z,\F)}{\ker(\Res_2)}\cong \Hom(L'\cap Z,\F) \cong L'\cap Z\] and it remains to show that $J\cong \ker(\Res_2)$. For one direction, consider an element $\chi\in J$ with extension $\hat{\chi}\in \Hom(L,\F)$. Then $L'\subseteq \ker \hat\chi$ since $\F$ is abelian, which implies that $L'\cap Z\subseteq \ker \chi$. Thus $\chi\in \ker(\Res_2)$ and we have $J\subseteq \ker(\Res_2)$. Conversely, let $\chi\in \ker(\Res_2)$. Then $\chi\in \Hom(Z,\F)$ is such that $L'\cap Z\subseteq \ker \chi$, which implies that $\chi$ induces a homomorphism \[\chi':\frac{Z}{L'\cap Z}\xrightarrow{} \F\] defined by $\chi'(z+(L'\cap Z)) = \chi(z)$. Since \[\frac{Z}{L'\cap Z}\cong \frac{Z+L'}{L'},\] there exists a homomorphism \[\chi'':\frac{Z+L'}{L'}\xrightarrow{} \F\] defined by $\chi''(z+L') = \chi'(z+(L'\cap Z))$. But $\chi''$ can be extended to a homomorphism $\chi''':L/L'\xrightarrow{} \F$ which is defined by $\chi'''(x+L') = \chi''(x+L')$ for all $x\in Z$. Since $L/L'$ is abelian, $\chi'''$ can be extended to a homomorphism $\hat\chi:L\xrightarrow{} \F$ which is defined by $\hat\chi(x) = \chi'''(x+L')$. Therefore $\chi\in J$ and the first statement holds. The second statement holds since $\Tra$ maps $\Hom(Z,\F)$ to $\cH^2(L/Z,\F)$.
\end{proof}

Let $L$ be a Leibniz algebra with free presentation $0\xrightarrow{} R\xrightarrow{} F\xrightarrow{\U} \LL\xrightarrow{} 0$. The induced sequence \[0\xrightarrow{} \frac{R}{FR+RF}\xrightarrow{} \frac{F}{FR+RF} \xrightarrow{} \LL\xrightarrow{}0\] is a central extension since $RF$ and $FR$ are both contained in $FR+RF$. It is not unique, but has the following property.

\begin{lem}\label{restriction to R}
Let $0\xrightarrow{} A\xrightarrow{} B\xrightarrow{\phi} C\xrightarrow{}0$ be a central extension and $\alpha:\LL\xrightarrow{} C$ be a homomorphism. Then there exists a homomorphism $\beta:F/(FR+RF) \xrightarrow{} B$ such that \[\begin{tikzcd}
0\arrow[r]& \frac{R}{FR+RF}\arrow[r] \arrow[d, "\gamma"] & \frac{F}{FR+RF} \arrow[r] \arrow[d,"\beta"] & \LL\arrow[r] \arrow[d, "\alpha"] &0 \\
0\arrow[r] &A \arrow[r] & B\arrow[r] &C\arrow[r] &0
\end{tikzcd}\] is commutative, where $\gamma$ is the restriction of $\beta$ to $R/(FR+RF)$.
\end{lem}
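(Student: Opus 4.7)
The plan is to use freeness of $F$ to lift the composite $\alpha\circ\U:F\to C$ to a homomorphism $\tilde\beta:F\to B$, and then to observe that centrality of $A$ in $B$ forces $\tilde\beta$ to annihilate $FR+RF$, so that it descends to the required map $\beta$.

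First, since $\phi:B\to C$ is surjective and $F$ is a free Leibniz algebra on some generating set $X$, for each $x\in X$ I can choose a preimage $b_x\in B$ with $\phi(b_x)=\alpha(\U(x))$. The universal property of the free Leibniz algebra extends the assignment $x\mapsto b_x$ uniquely to a homomorphism $\tilde\beta:F\to B$. Since $\phi\circ\tilde\beta$ and $\alpha\circ\U$ are both homomorphisms $F\to C$ that agree on $X$, they coincide on all of $F$.

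Next, for any $r\in R$ one has $\phi(\tilde\beta(r))=\alpha(\U(r))=0$, so $\tilde\beta(R)\subseteq\ker\phi=A$. Because $A\subseteq Z(B)$, we get $\tilde\beta(FR)=\tilde\beta(F)\tilde\beta(R)\subseteq B\cdot A=0$ and similarly $\tilde\beta(RF)=0$, so $\tilde\beta$ vanishes on $FR+RF$ and factors through the quotient to yield a homomorphism $\beta:F/(FR+RF)\to B$.

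Finally, commutativity is essentially built in. The right square commutes because $\phi(\beta(f+(FR+RF)))=\phi(\tilde\beta(f))=\alpha(\U(f))$, and $\U(f)$ is precisely the image of $f+(FR+RF)$ in $\LL$ under the induced quotient map. Taking $\gamma$ to be the restriction of $\beta$ to $R/(FR+RF)$ makes the left square commute tautologically, and the inclusion $\tilde\beta(R)\subseteq A$ established above confirms that $\gamma$ indeed lands in $A$. The only mildly delicate step is the initial lifting, which rests on the universal property of the free Leibniz algebra; everything that follows is a routine consequence of the centrality hypothesis.
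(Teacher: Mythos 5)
Your proposal is correct and follows essentially the same route as the paper: lift $\alpha\circ\U$ through $\phi$ using the freeness of $F$, observe that the lift sends $R$ into $A\subseteq Z(B)$ so that $FR+RF$ is annihilated, and pass to the quotient. The only difference is that you spell out the lifting on generators, where the paper simply invokes freeness.
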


\begin{proof}
Since $F$ is free, there exists a homomorphism $\sigma:F\xrightarrow{} B$ such that \[\begin{tikzcd}
 F\arrow[r, "\U"] \arrow[d, "\sigma", swap] & \LL \arrow[d,"\alpha"] \\ B \arrow[r, "\phi"] & C \end{tikzcd}\] is commutative. Let $r\in R\subseteq F$. Then $\U(r) = 0$ since $\ker \U = R$. Therefore $0=\alpha\circ \U(r) = \phi\circ\sigma(r)$ and so $\sigma(R)\subseteq \ker \phi$. We want to show that $FR+RF\subseteq \ker \sigma$. If $x\in F$ and $r\in R$, then $\sigma(xr) = \sigma(x)\sigma(r) = 0$ and $\sigma(rx) = \sigma(r)\sigma(x) = 0$ since $\sigma(r)\in \ker \phi = A \subseteq Z(B)$. Now $\sigma$ induces a homomorphism $\beta:F/(FR+RF)\xrightarrow{} B$. The left diagram commutes since we may take $A\xrightarrow{} B$ to be the inclusion map.
\end{proof}

\begin{lem}\label{tra surj}
Let $0\xrightarrow{} R\xrightarrow{} F\xrightarrow{} \LL\xrightarrow{} 0$ be a free presentation of $\LL$ and let $A$ be a central $\LL$-module. Then the transgression map $\Tra:\Hom(R/(FR+RF),A)\xrightarrow{} \cH^2(\LL,A)$ associated with \[0\xrightarrow{} \frac{R}{FR+RF}\xrightarrow{} \frac{F}{FR+RF}\xrightarrow{\phi} \LL\xrightarrow{} 0\] is surjective.
\end{lem}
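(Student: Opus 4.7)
The plan is to invoke Lemma \ref{restriction to R} to pull back an arbitrary central extension representing a given cohomology class to the canonical extension built from the free presentation, and then identify the resulting restriction with a preimage under $\Tra$.

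Concretely, I would start with an arbitrary class $\overline{g} \in \cH^2(\LL,A)$, represented by a cocycle $g \in \cZ^2(\LL,A)$. As recalled in Section \ref{batten 3}, this class corresponds to a central extension $0 \to A \to B \xrightarrow{\phi_B} \LL \to 0$ together with a section $\mu_B : \LL \to B$ such that $g(\overline{x},\overline{y}) = \mu_B(\overline{x})\mu_B(\overline{y}) - \mu_B(\overline{x}\overline{y})$. Apply Lemma \ref{restriction to R} with $C = \LL$, $\alpha = \mathrm{id}_\LL$, and this extension in the bottom row. This produces a homomorphism $\beta : F/(FR+RF) \to B$ and its restriction $\gamma : R/(FR+RF) \to A$ fitting into a commutative ladder.

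The candidate preimage of $\overline{g}$ is $\gamma$. To verify $\Tra(\gamma) = \overline{g}$, fix a section $\mu : \LL \to F/(FR+RF)$ of $\phi$ and let $f(\overline{x},\overline{y}) = \mu(\overline{x})\mu(\overline{y}) - \mu(\overline{x}\overline{y})$ be the associated $R/(FR+RF)$-valued cocycle, so that by definition $\Tra(\gamma) = \overline{\gamma \circ f}$. The key observation is that $\beta \circ \mu$ is a section of $\phi_B$: commutativity of the right square of the ladder gives $\phi_B \circ \beta = \phi$, hence $\phi_B \circ \beta \circ \mu = \mathrm{id}_\LL$. Since $\beta$ restricted to $R/(FR+RF)$ is $\gamma$, the cocycle associated to the section $\beta \circ \mu$ of the extension $B$ is precisely $\gamma \circ f$. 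Because any two cocycles representing the same central extension are cohomologous in $\cH^2(\LL,A)$, we conclude $\overline{\gamma \circ f} = \overline{g}$, and hence $\Tra(\gamma) = \overline{g}$.

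The step I expect to require the most care is checking that $\beta \circ \mu$ really is a section of $\phi_B$ and that the cocycle it produces coincides on the nose with $\gamma \circ f$ (not merely up to a coboundary); this uses in an essential way that $\gamma$ is the honest restriction of $\beta$ and that $f$ takes values in $R/(FR+RF)$. Everything else is formal, and the surjectivity of $\Tra$ follows immediately since $g$ was an arbitrary representative of an arbitrary class in $\cH^2(\LL,A)$.
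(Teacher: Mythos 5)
Your proposal is correct and follows essentially the same route as the paper: apply Lemma \ref{restriction to R} with $\alpha = \mathrm{id}_\LL$ to an extension representing $\overline{g}$, observe that $\beta\circ\mu$ is a section of that extension whose associated cocycle is exactly $\gamma\circ f$ (because $\beta$ is a homomorphism restricting to $\gamma$ on $R/(FR+RF)$, where $f$ takes its values), and conclude $\Tra(\gamma)=\overline{\gamma\circ f}=\overline{g}$ since cocycles of the same extension are cohomologous. No gaps.
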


\begin{proof}
Consider $\overline{g}\in \cH^2(\LL,A)$ and let $0\xrightarrow{} A\xrightarrow{} E\xrightarrow{\vp} L\xrightarrow{} 0$ be an associated central extension. By Lemma \ref{restriction to R}, there exists a homomorphism $\theta$ such that \[\begin{tikzcd}
0\arrow[r]& \frac{R}{FR+RF}\arrow[r] \arrow[d, "\gamma"] & \frac{F}{FR+RF} \arrow[r, "\phi"] \arrow[d,"\theta"] & \LL\arrow[r] \arrow[d, "\text{id}"] &0 \\
0\arrow[r] &A \arrow[r] & E\arrow[r, "\vp"] &\LL\arrow[r] &0
\end{tikzcd}\] is commutative and $\gamma = \theta|_{R/(FR+RF)}$. Let $\mu$ be a section of $\phi$. Then $\vp\circ\theta\circ \mu = \phi\circ \mu = \text{id}_L$ and so $\theta\circ \mu$ is a section of $\vp$. Let $\lambda = \theta\circ \mu$ and define $\beta(x,y) = \lambda(x)\lambda(y) - \lambda(xy)$. Then $\beta\in \cZ^2(\LL,A)$ and $\beta$ is cohomologous with $g$ since they are associated with the same extension. One computes \begin{align*}
    \beta(x,y) &= \theta(\mu(x))\theta(\mu(y)) - \theta(\mu(xy)) \\ &= \theta(\mu(x)\mu(y) - \mu(xy))\\ &= \gamma(\mu(x)\mu(y) - \mu(xy)) \\ &= \gamma(f(x,y))
\end{align*} where $f(x,y) = \mu(x)\mu(y) - \mu(xy)$ and $\gamma = \theta|_{R/(FR+RF)}$. Thus $\Tra(\gamma) = \overline{\gamma\circ f} = \overline{\beta} = \overline{g}$.
\end{proof}

\begin{lem}\label{set lemma}
If $C\subseteq A$ and $C\subseteq B$, then $A/C\cap B/C = (A\cap B)/C$.
\end{lem}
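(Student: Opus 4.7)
The plan is to prove the claimed set equality by showing both inclusions, working in the common ambient object in which $A$ and $B$ sit (so that $A/C$ and $B/C$ can be intersected inside a larger quotient). The forward containment $(A\cap B)/C \subseteq A/C \cap B/C$ is immediate: any coset $x+C$ with $x\in A\cap B$ automatically lies in both $A/C$ and $B/C$, hence in their intersection. This direction does not even use the hypothesis that $C$ is contained in both $A$ and $B$.

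For the reverse inclusion, I would take an arbitrary $z\in A/C \cap B/C$ and pick representatives $z = a+C = b+C$ with $a\in A$ and $b\in B$. Then $a-b\in C$, and since the hypothesis gives $C\subseteq B$, we can rewrite $a = b + (a-b) \in B$. Hence $a \in A\cap B$ and $z = a+C$ lies in $(A\cap B)/C$, completing the inclusion.

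There is no real obstacle here; the lemma is a routine coset-chasing fact. The only point worth flagging is that the hypothesis $C\subseteq B$ (equivalently, $C\subseteq A$, by the symmetric argument) is used exactly once, to legitimately transport the representative $a$ into the other subspace. I expect to state the proof in essentially one short paragraph.
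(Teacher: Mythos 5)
Your proof is correct and follows essentially the same route as the paper: the forward inclusion is immediate, and for the reverse inclusion the paper likewise writes $x = a + c_1 = b + c_2$ and uses $C \subseteq B$ to conclude $a = b + c_2 - c_1 \in B$, hence $a \in A \cap B$. The only cosmetic difference is that you phrase everything in terms of cosets $a + C$ while the paper works with individual representatives; the substance is identical.
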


\begin{proof}
Clearly $(A\cap B)/C\subseteq A/C\cap B/C$. Let $x\in A/C\cap B/C$. Then $x=a+c_1 = b+c_2$ for $a\in A$, $b\in B$, and $c_1,c_2\in C$. Since $C\subseteq B$, $a=b+c_2-c_1\in B$, which implies that $a\in A\cap B$. Then $x=a+c\in (A\cap B)/C$ and so $A/C\cap B/C \subseteq (A\cap B)/C$.
\end{proof}

\begin{thm}
Let $\LL$ be a Leibniz algebra over a field $\F$ and $0\xrightarrow{} R\xrightarrow{} F\xrightarrow{} \LL\xrightarrow{} 0$ be a free presentation of $\LL$. Then \[\cH^2(\LL,\F)\cong \frac{F'\cap R}{FR+RF}.\] In particular, if $\LL$ is finite-dimensional, then $M(\LL)\cong \cH^2(\LL,\F)$.
\end{thm}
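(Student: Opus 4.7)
The plan is to apply the machinery of Theorem \ref{if tra surj} and Lemma \ref{tra surj} to the canonical central extension attached to the free presentation, and then identify the resulting invariant with $(F'\cap R)/(FR+RF)$ using Lemma \ref{set lemma}.

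Write $\widetilde{F} := F/(FR+RF)$ and $\widetilde{R} := R/(FR+RF)$, so that the induced sequence $0\to \widetilde{R}\to \widetilde{F}\to L\to 0$ is the central extension noted just before Lemma \ref{restriction to R}. Lemma \ref{tra surj} states that the transgression map $\Tra:\Hom(\widetilde{R},\F)\to \cH^2(L,\F)$ attached to this extension is surjective. Applying Theorem \ref{if tra surj} with ambient algebra $\widetilde{F}$ and central ideal $\widetilde{R}$ (so that $\widetilde{F}/\widetilde{R}\cong L$) then yields $\widetilde{F}'\cap \widetilde{R}\cong \cH^2(L,\F)$.

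It remains to identify $\widetilde{F}'\cap \widetilde{R}$ with $(F'\cap R)/(FR+RF)$. Since $FR+RF\subseteq FF = F'$, the quotient map $F\to \widetilde{F}$ sends $F'$ onto $\widetilde{F}'$ with kernel $FR+RF$, so $\widetilde{F}' = F'/(FR+RF)$. Since $R$ is an ideal of $F$ we also have $FR+RF\subseteq R$. Lemma \ref{set lemma} applied with $A=F'$, $B=R$, and common subspace $FR+RF$ then gives $\widetilde{F}'\cap \widetilde{R} = (F'\cap R)/(FR+RF)$, which proves the first claim. The second claim is immediate from part~(2) of Theorem \ref{batten 1.12}, which identifies $M(L)$ with this same quotient whenever $L$ is finite-dimensional.

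Since all of the real work has been done in the preceding lemmas, I do not anticipate a substantive obstacle; the only subtlety is being careful with the elementary quotient identification $\widetilde{F}' = F'/(FR+RF)$, which rests on the containment $FR+RF\subseteq F'$, and with verifying that the hypotheses of Lemma \ref{set lemma} genuinely apply to both $F'$ and $R$ over the common subspace $FR+RF$.
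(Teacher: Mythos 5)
Your proposal is correct and follows essentially the same route as the paper: invoke Lemma \ref{tra surj} for surjectivity of the transgression, apply Theorem \ref{if tra surj} to the central extension $0\to R/(FR+RF)\to F/(FR+RF)\to L\to 0$, and identify the resulting intersection via Lemma \ref{set lemma}, finishing with Theorem \ref{batten 1.12}. Your explicit justification that $\widetilde{F}' = F'/(FR+RF)$ (via $FR+RF\subseteq F'$) is a small point the paper leaves implicit, but the argument is the same.
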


\begin{proof}
Let $\overline{R} = R/(FR+RF)$ and $\overline{F} = F/(FR+RF)$. Then $0\xrightarrow{} \overline{R}\xrightarrow{} \overline{F}\xrightarrow{} \LL\xrightarrow{} 0$ is a central extension. By Lemma \ref{tra surj}, \[\Tra:\Hom(\overline{R},\F)\xrightarrow{} \cH^2(\LL,\F)\] is surjective. By Theorem \ref{if tra surj}, \[\overline{F}'\cap \overline{R} \cong \cH^2(\overline{F}/\overline{R},\F) \cong \cH^2(\LL,\F).\] By Lemma \ref{set lemma}, \[\overline{F}'\cap \overline{R} \cong \frac{F'}{FR+RF} \cap \frac{R}{FR+RF} = \frac{F'\cap R}{FR\cap RF}.\] Therefore, \[M(\LL) = \frac{F'\cap R}{FR+RF} \cong \cH^2(\LL,\F)\] by the characterization of $M(L)$ from Theorem \ref{batten 1.12}.
\end{proof}

Thus is the multiplier $M(L)$ characterized by $\cH^2(L,\F)$. We have now proven the main result of Batten's Chapter 3 for the Leibniz case. We conclude this section with the Leibniz analogue of a corollary which appears at the end of said chapter.

\begin{cor}
For any cover $E$ of $\LL$ and any subalgebra $A$ of $E$ satisfying
\begin{enumerate}
    \item $A\subseteq Z(E)\cap E'$,
    \item $A\cong M(\LL)$,
    \item $\LL\cong E/A$,
\end{enumerate}
the associated transgression map $\Tra:\Hom(A,\F)\xrightarrow{} M(\LL)$ is bijective.
\end{cor}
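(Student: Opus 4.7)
The plan is to apply the Hochschild--Serre sequence of Theorem \ref{HS} to the central extension $0\xrightarrow{} A\xrightarrow{} E\xrightarrow{} \LL\xrightarrow{} 0$, which is available thanks to hypotheses (1) and (3). The relevant portion of that exact sequence reads
\[\Hom(E,\F) \xrightarrow{\Res} \Hom(A,\F) \xrightarrow{\Tra} \cH^2(\LL,\F) \xrightarrow{\Inf_2} \cH^2(E,\F),\]
so bijectivity of $\Tra$ will follow once one shows both that $\ima(\Res) = 0$ and that $\ima(\Tra)$ fills all of $\cH^2(\LL,\F)$.

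For injectivity, I would invoke condition (1). Since $\F$ is abelian, every $\chi\in\Hom(E,\F)$ satisfies $E'\subseteq \ker\chi$, and hypothesis (1) gives $A\subseteq E'\subseteq \ker\chi$. Thus $\Res(\chi) = \chi\circ\iota = 0$ for every such $\chi$, which by exactness at $\Hom(A,\F)$ forces $\ker(\Tra) = \ima(\Res) = 0$.

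For surjectivity, I would apply Theorem \ref{if tra surj} with $E$ in the role of the ambient algebra and $A$ as the central ideal. That theorem yields $\ima(\Tra) \cong E'\cap A$, and condition (1) collapses the right-hand side to $A$. Combining this with $A \cong M(\LL)$ from (2) and with the just-proved isomorphism $M(\LL)\cong \cH^2(\LL,\F)$ gives $\dim\ima(\Tra) = \dim\cH^2(\LL,\F)$. Since any algebra possessing a cover is finite-dimensional, matching dimensions forces $\ima(\Tra) = \cH^2(\LL,\F)$, as desired.

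The chief obstacle is almost purely notational: one must verify that the transgression in the statement of the corollary is indeed the one associated to the extension $0\to A \to E \to \LL\to 0$, and that the substitutions made in Theorem \ref{if tra surj} are legitimate in this setting. No further calculations are required beyond the machinery already assembled in this section.
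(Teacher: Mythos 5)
Your proposal is correct and follows essentially the same route as the paper: the Hochschild--Serre sequence for $0\to A\to E\to \LL\to 0$, the observation that $A\subseteq E'$ forces $\ima(\Res)=0$ and hence $\ker(\Tra)=0$, and a finite-dimension count to upgrade injectivity to bijectivity. The only cosmetic difference is in that last step: the paper simply notes $\Hom(A,\F)\cong A\cong M(\LL)$ and concludes from injectivity, whereas you compute $\ima(\Tra)\cong E'\cap A=A$ via Theorem~\ref{if tra surj}; both amount to the same dimension comparison.
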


\begin{proof}
First note that $0\xrightarrow{} A\xrightarrow{} E\xrightarrow{} \LL\xrightarrow{} 0$ is a central extension of $\LL$. Invoking the Hochschild-Serre spectral sequence yields \[0\xrightarrow{} \Hom(\LL,\F)\xrightarrow{\Inf_1} \Hom(E,\F)\xrightarrow{\Res} \Hom(A,\F)\xrightarrow{\Tra} \cH^2(\LL,\F)\xrightarrow{\Inf_2} \cH^2(E,\F)\] with $\ima(\Res) = \ker(\Tra)$. Furthermore, any $\theta\in \Hom(E,\F)$ yields $\Res(\theta)\in \Hom(A,\F)$. Now let $a\in A\subseteq E'$. Then $a=e_1e_2$ for some $e_1, e_2\in E$ which implies that $\Res(\theta(a)) = \Res(\theta(e_1)\theta(e_2)) = \Res(0) = 0$. Thus $\ima(\Res) = 0$, making $\ker(\Tra) = 0$, and so $\Tra$ injective. Since $\Hom(A,\F)\cong A\cong M(\LL)$, $\Tra$ is bijective.
\end{proof}

\section{Unicentral Leibniz Algebras}\label{batten 4}
Let $L$ be a Leibniz algebra. The objective of this section is to develop criteria for when the center of any cover of $L$ maps onto the center of $L$. One of these criteria will take the form of $Z(L)\subseteq Z^*(L)$, where $Z^*(L)$ denotes the intersection of all images $\U(Z(E))$ such that $0\xrightarrow{} \ker \U\xrightarrow{} E\xrightarrow{\U} L\xrightarrow{} 0$ is a central extension of $L$. It is easy to see that $Z^*(L)\subseteq Z(L)$. We say a Leibniz algebra $L$ is \textit{unicentral} if $Z(L) = Z^*(L)$. To prove our result, we will establish conditions for a more general central ideal $Z$ in $L$ before specializing to $Z(L)$.

\subsection{Sequences}
We begin by extending our Hochschild-Serre sequence. Let $Z$ be a central ideal in $L$ and consider the natural central extension $0\xrightarrow{} Z\xrightarrow{} L\xrightarrow{} L/Z\xrightarrow{}0$. To define our $\delta$ map, consider a cocycle $f'\in \cZ^2(L,\F)$ and define two bilinear forms $f_1'':L/L'\times Z\xrightarrow{} \F$ and $f_2'':Z\times L/L'\xrightarrow{} \F$ by $f_1''(x+L',z) = f'(x,z)$ and $f_2''(z,x+L') = f'(z,x)$ for $x\in L$ and $z\in Z$. To check that they are well-defined, one computes \begin{align*}
    f_1''(xy+L',z) &= f'(xy,z) \\ &= f'(x,yz) - f'(y,xz) \\ &= 0
\end{align*} and \begin{align*}
    f_2''(z,xy+L') &= f'(z,xy) \\ &= f'(x,zy) - f(xz,y) \\ &= 0
\end{align*} since $z\in Z(L)$. Hence $(f_1'',f_2'')\in \Bil(L/L'\times Z,\F)\oplus \Bil(Z\times L/L',\F)\cong L/L'\otimes Z\oplus Z\otimes L/L'$. Now consider a coboundary $f'\in \cB^2(L,\F)$. By definition, there exists a linear map $\E:L\xrightarrow{} \F$ such that $f'(x,y) = -\E(xy)$. One computes $f_1''(x+L',z) = f'(x,z) = -\E(xz) = 0$ and $f_2''(z,x+L') = f'(z,x) = -\E(zx) = 0$ since $z\in Z(L)$. Hence, a map $\delta:f'+\cB^2(L,\F) \mapsto (f_1'',f_2'')$ is induced which is clearly linear since $f'$, $f_1''$, and $f_2''$ are all in vector spaces of bilinear forms and the latter two are defined by $f'$.

\begin{thm}\label{batten 4.1}
Let $Z$ be a central ideal of Leibniz algebra $L$. The sequence \[\cH^2(L/Z,\F)\xrightarrow{\Inf} \cH^2(L,\F)\xrightarrow{\delta} L/L'\otimes Z\oplus Z\otimes L/L'\] is exact.
\end{thm}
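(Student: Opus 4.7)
The plan is to verify the two set-theoretic inclusions $\ima(\Inf)\subseteq \ker(\delta)$ and $\ker(\delta)\subseteq \ima(\Inf)$ that together establish exactness at $\cH^2(L,\F)$.

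For the easy direction $\ima(\Inf)\subseteq \ker(\delta)$, I would start with a class $\overline{g}\in \cH^2(L/Z,\F)$ and recall that $\Inf(\overline{g}) = \overline{g'}$, where $g'(x,y) = g(\overline{x},\overline{y})$ with $\overline{x} = x+Z$. For any $z\in Z$ we have $\overline{z}=0$, so bilinearity of $g$ immediately gives $g'(x,z) = g(\overline{x},0) = 0$ and $g'(z,x) = g(0,\overline{x}) = 0$. Thus both pairings $g_1''$ and $g_2''$ vanish and $\delta(\Inf(\overline{g})) = 0$.

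The main content is the reverse inclusion. Suppose $\overline{f'} \in \ker(\delta)$; unpacking the definition, this says $f'(x,z) = 0$ and $f'(z,x) = 0$ for all $x\in L$ and $z\in Z$. I would then descend $f'$ to a bilinear form $g$ on $L/Z \times L/Z$ by declaring
\[g(\overline{x},\overline{y}) := f'(x,y).\]
The key step --- really the only one that uses the hypothesis --- is well-definedness: for $x' = x+z_1$ and $y' = y+z_2$ with $z_1,z_2 \in Z$, bilinearity together with the vanishing gives
\[f'(x',y') = f'(x,y) + f'(x,z_2) + f'(z_1,y) + f'(z_1,z_2) = f'(x,y).\]
Bilinearity of $g$ is then automatic, and the 2-cocycle identity for $g$ is inherited from the 2-cocycle identity for $f'$ via $\overline{x}\,\overline{y} = \overline{xy}$ applied term by term. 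By construction $\Inf(\overline{g}) = \overline{g'}$ with $g'(x,y) = g(\overline{x},\overline{y}) = f'(x,y)$, so $g' = f'$ and $\overline{f'}\in \ima(\Inf)$.

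I do not anticipate any serious obstacle; the argument is essentially a diagram chase. The only nontrivial check is the well-definedness of $g$, and this is exactly where the vanishing of $f_1''$ and $f_2''$ is used --- consistent with the intuition that these pairings are precisely the obstructions to descending a cocycle on $L$ to one on the quotient $L/Z$.
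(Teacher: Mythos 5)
Your proposal is correct and follows essentially the same route as the paper: the forward inclusion via $\overline{z}=0$ and bilinearity, and the reverse inclusion by descending $f'$ to a well-defined cocycle $g$ on $L/Z$ using the vanishing of the two pairings. No gaps.
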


\begin{proof}
Let $f\in \cZ^2(L/Z,\F)$. Then $\Inf(f+\cB^2(L/Z,\F)) = f'+\cB^2(L,\F)$ where $f'$ is the cocycle defined by $f'(x,y) = f(x+Z,y+Z)$. We also have $\delta(f'+\cB^2(L,\F)) = (f_1'',f_2'')$ where, for all $x\in L$ and $z\in Z$, \begin{align*}
    f_1''(x+L',z) = f'(x,z) = f(x+Z,z+Z) = 0,\\
    f_2''(z,x+L') = f'(z,x) = f(z+Z,x+Z) = 0,
\end{align*} which implies that $\delta(\Inf(f+\cB^2(L/Z,\F))) = (f_1'',f_2'') = (0,0)$. Therefore $\ima(\Inf)\subseteq \ker \delta$.

Conversely, suppose $f'\in \cZ^2(L,\F)$ is such that $\delta(f'+\cB^2(L,\F)) = (f_1'',f_2'') = (0,0)$. Then, for all $x\in L$ and $z\in Z$, one has $0=f_1''(x+L',z) = f'(x,z)$ and $0=f_2''(z,x+L') = f'(z,x)$. Hence, for all $z,z'\in Z$ and $x,y\in L$, one computes \begin{align*}
    f'(x+z, y+z') &= f'(x,y) + f_1''(x+L',z') + f_2''(z,y+L') +f_1''(z+L',z') \\ &= f'(x,y),
\end{align*} which yields a bilinear form $g:L/Z\times L/Z\xrightarrow{} \F$, defined by $g(x+Z,y+Z) = f'(x,y)$, that is well-defined. Furthermore, $g\in \cZ^2(L/Z,\F)$ since $f'$ is a cocycle. Thus $\Inf(g+\cB^2(L/Z,\F)) = f'+\cB^2(L,\F)$ and so $\ker \delta\subseteq \ima(\Inf)$.
\end{proof}

\begin{thm}\label{ganea sequence}
(Ganea Sequence) Let $Z$ be a central ideal in a finite-dimensional Leibniz algebra $\LL$. Then the sequence \[L/L'\otimes Z \oplus Z\otimes \LL/\LL'\xrightarrow{} M(\LL)\xrightarrow{} M(\LL/Z)\xrightarrow{} \LL'\cap Z\xrightarrow{} 0\] is exact.
\end{thm}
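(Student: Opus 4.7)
The plan is to realize every term in the sequence through a single free presentation and to build the three connecting maps out of inclusions, a quotient, and a lift-then-multiply construction. Fix a free presentation $0\to R\to F\xrightarrow{\pi} L\to 0$ and let $S=\pi^{-1}(Z)\subseteq F$, so $R\subseteq S$. Because $Z$ is central in $L$, one has $\pi(FS+SF)=LZ+ZL=0$, hence $FS+SF\subseteq R$; in particular $S$ is an ideal of $F$ and $0\to S\to F\to L/Z\to 0$ is a free presentation of $L/Z$. By Theorem \ref{batten 1.12} this gives
\[ M(L)=\frac{F'\cap R}{FR+RF},\qquad M(L/Z)=\frac{F'\cap S}{FS+SF}. \]
A short modular-law / second-isomorphism-theorem computation identifies the rightmost term as $L'\cap Z\cong (F'\cap S)/(F'\cap R)$.

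With these identifications in hand, I would define the three maps as follows. The map $\beta\colon M(L/Z)\to L'\cap Z$ is the natural quotient $(F'\cap S)/(FS+SF)\twoheadrightarrow (F'\cap S)/(F'\cap R)$, which makes sense precisely because $FS+SF\subseteq F'\cap R$; it is surjective, giving exactness at $L'\cap Z$. The map $\alpha\colon M(L)\to M(L/Z)$ is induced by the inclusions $F'\cap R\hookrightarrow F'\cap S$ and $FR+RF\hookrightarrow FS+SF$. Finally, $\gamma\colon L/L'\otimes Z\oplus Z\otimes L/L'\to M(L)$ is the "lift and multiply" map: on generators $\bar{x}\otimes z\mapsto \tilde{x}\tilde{z}+(FR+RF)$ and $z\otimes \bar{x}\mapsto \tilde{z}\tilde{x}+(FR+RF)$, where $\tilde{x}\in F$ and $\tilde{z}\in S$ are any lifts. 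The containment $FS+SF\subseteq F'\cap R$ guarantees that $\tilde{x}\tilde{z}$ and $\tilde{z}\tilde{x}$ live in the numerator $F'\cap R$.

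The delicate point, and the main obstacle, is the well-definedness of $\gamma$: it must be independent of (a) the choice of lift in $F$, and (b) the choice of representative modulo $L'$. For (a), changing $\tilde{x}$ by $r\in R$ alters $\tilde{x}\tilde{z}$ by $r\tilde{z}\in RF\subseteq FR+RF$, and changing $\tilde{z}$ by $r\in R$ alters it by $\tilde{x}r\in FR$. For (b), if $\bar{x}$ is replaced by $\bar{x}+\ell'$ with $\ell'\in L'$, then $\tilde{x}$ changes by an element of $F'+R$; the $R$-component is handled as in (a), while for an $F'$-component of the form $ab$ the Leibniz identity gives $(ab)\tilde{z}=a(b\tilde{z})-b(a\tilde{z})$, and $b\tilde{z},a\tilde{z}\in FS\subseteq R$ (using that $S$ is an ideal), so $(ab)\tilde{z}\in FR\subseteq FR+RF$. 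Bilinearity is immediate, and the argument for the $Z\otimes L/L'$ factor is symmetric.

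It remains to verify exactness at $M(L)$ and $M(L/Z)$, and both are bookkeeping once the maps are set up. At $M(L/Z)$, $\ker\beta=(F'\cap R)/(FS+SF)$, and this is exactly $\ima\alpha$ since $\alpha$ is inclusion mod $FS+SF$ and $(F'\cap R)+(FS+SF)=F'\cap R$. At $M(L)$, $\ker\alpha=[(F'\cap R)\cap(FS+SF)]/(FR+RF)=(FS+SF)/(FR+RF)$ because $FS+SF\subseteq F'\cap R$; and $\ima\gamma$ is exactly the image in $M(L)$ of the subspace $FS+SF$, which is the same subspace. Once well-definedness of $\gamma$ is handled, the exactness statements reduce to these two elementary equalities of subspaces of $F'\cap S$.
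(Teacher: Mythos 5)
Your proposal is correct and follows essentially the same route as the paper: a single free presentation with $S=\pi^{-1}(Z)$ (the paper's $T$), the identifications $M(L)\cong (F'\cap R)/(FR+RF)$ and $M(L/Z)\cong (F'\cap S)/(FS+SF)$, inclusion-induced maps for the middle of the sequence, and a lift-and-multiply map (the paper's $\theta_1,\theta_2$ defined on $F/(R+F')\times T/R$) whose image $(FS+SF)/(FR+RF)$ is matched with $\ker\alpha$. The only differences are cosmetic: you define the tensor-factor map via lifts to $F$ rather than on the isomorphic quotients, and you identify $L'\cap Z$ as $(F'\cap S)/(F'\cap R)$ where the paper uses $\bigl(T\cap(F'+R)\bigr)/R$.
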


\begin{proof}
Let $F$ be a free Leibniz algebra such that $\LL = F/R$ and $Z=T/R$ for some ideals $T$ and $R$ of $F$. Since $Z\subseteq Z(\LL)$, one has $T/R\subseteq Z(F/R)$ and $FT + TF \subseteq R$. Inclusion maps $\hat{\beta}:R\cap F'\xrightarrow{} T\cap F'$ and $\hat{\gamma}:T\cap F'\xrightarrow{} T\cap (F'+R)$ induce homomorphisms \[\frac{R\cap F'}{FR+RF}\xrightarrow{\beta} \frac{T\cap F'}{FT+TF}\xrightarrow{\gamma} \frac{T\cap (F'+R)}{R} \xrightarrow{} 0.\] Since $R\subseteq T$, one has \[\frac{T\cap (F'+R)}{R} = \frac{(T+R)\cap (F'+R)}{R} \cong \frac{(T\cap F') + R}{R}\] which implies that $\gamma$ is surjective. By Theorem \ref{batten 1.12}, \[M(\LL)\cong \frac{R\cap F'}{FR+RF} \hspace{.75cm} \text{ and }\hspace{.75cm} M(\LL/Z)\cong \frac{T\cap F'}{FT+TF}.\] Also \[\LL'\cap Z \cong (F/R)'\cap (T/R) \cong \frac{F'+R}{R}\cap \frac{T}{R} \cong \frac{(F'+R)\cap T}{R}.\] Therefore, the sequence $M(\LL/Z)\xrightarrow{\gamma} L'\cap Z\xrightarrow{} 0$ is exact. Since \[\ker \gamma = \frac{(T\cap F')\cap R}{FT+TF} = \frac{R\cap F'}{FT+TF} = \ima \beta,\] the sequence $M(L)\xrightarrow{\beta} M(L/Z)\xrightarrow{\gamma} L'\cap Z$ is exact.

It remains to show that $L/L'\otimes Z \oplus Z\otimes \LL/\LL'\xrightarrow{} M(\LL)\xrightarrow{\beta} M(\LL/Z)$ is exact. Define a pair of maps \begin{align*}
    \theta_1&:\frac{F}{R+F'}\times \frac{T}{R}\xrightarrow{} \frac{R\cap F'}{FR+RF}, \\
    \theta_2&:\frac{T}{R}\times \frac{F}{R+F'} \xrightarrow{} \frac{R\cap F'}{FR+RF}
\end{align*} by $\theta_1(f+(R+ F'),t+R) = ft+(FR+RF)$ and $\theta_2(t+R,f+(R+F')) = tf+(FR+RF)$. Both are bilinear because multiplication is bilinear. To check that $\theta_1$ and $\theta_2$ are well-defined, suppose $(f+(R+F'),t+R) = (f'+(R+F'),t'+R)$ for $t,t'\in T$ and $f,f'\in F$. Then $t-t'\in R$ and $f-f'\in R+F'$ which implies that $t=t'+r$ for $r\in R$ and $f=f'+x$ for $x\in R+F'$. One computes \begin{align*}
    tf - t'f' &= (t'+r)(f'+x) - t'f' \\ &= t'x + rf' + rx
\end{align*}
and \begin{align*}
    ft - f't' &= (f'+x)(t'+r) - f't' \\ &= xt' + f'r + xr
\end{align*} which both fall in $FR+RF$ by the Leibniz identity and the fact that $FT+TF\subseteq R$. Thus $\theta_1$ and $\theta_2$ are well-defined, and so induce linear maps \begin{align*}
    \overline{\theta_1}:\frac{F}{R+F'}\otimes \frac{T}{R}\xrightarrow{} \frac{R\cap F'}{FR+RF}, \\
    \overline{\theta_2}:\frac{T}{R}\otimes \frac{F}{R+F'}\xrightarrow{} \frac{R\cap F'}{FR+RF}.
\end{align*} These, in turn, yield a linear transformation \[\overline{\theta}:\frac{F}{R+F'}\otimes \frac{T}{R} \oplus \frac{T}{R}\otimes \frac{F}{R+F'}\xrightarrow{} \frac{R\cap F'}{FR+RF}\] defined by $\overline{\theta}(a,b) = \overline{\theta_1}(a) + \overline{\theta_2}(b)$. The image of $\overline{\theta}$ is \[\frac{FT+TF}{FR+RF}\] which is precisely equal to $\{x+(FR+RF)~|~ x\in R\cap F',~ x\in FT+TF\} = \ker \beta$. Thus the sequence \begin{alignat*}{2}
    \frac{F}{R+F'}\otimes \frac{T}{R} \oplus \frac{T}{R}\otimes \frac{F}{R+F'} \cong L/L'\otimes Z\oplus Z\otimes L/L' & \xrightarrow{} \frac{R\cap F'}{FR+RF} \cong M(L) \\
    &\xrightarrow{} \frac{F'\cap T}{FT+TF}\cong M(L/Z)
\end{alignat*} is exact.
\end{proof}

\begin{cor}
(Stallings Sequence) Let $Z$ be a central ideal of a Leibniz algebra $\LL$. Then the following map is exact: \[M(\LL)\xrightarrow{} M(\LL/Z)\xrightarrow{} Z\xrightarrow{} \LL/\LL'\xrightarrow{} \frac{L}{Z+\LL'}\xrightarrow{} 0.\]
\end{cor}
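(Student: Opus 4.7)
The plan is to derive the Stallings sequence directly from the Ganea sequence in Theorem \ref{ganea sequence} by splicing in a short exact sequence that factors the connecting map through $L'\cap Z$. I would define the maps of the Stallings sequence as follows: keep $M(L)\to M(L/Z)$ from Ganea; take $M(L/Z)\to Z$ to be the composition of the Ganea map $M(L/Z)\to L'\cap Z$ with the inclusion $L'\cap Z \hookrightarrow Z$; take $Z\to L/L'$ to be the inclusion $Z\hookrightarrow L$ followed by the canonical projection $L\to L/L'$; and take $L/L'\to L/(Z+L')$ to be the canonical projection induced by $L' \subseteq Z+L'$.

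Exactness at $M(L/Z)$ is immediate from the Ganea sequence, since $L'\cap Z \hookrightarrow Z$ is injective and therefore the kernel of $M(L/Z)\to Z$ coincides with the kernel of $M(L/Z)\to L'\cap Z$, which equals the image of $M(L)\to M(L/Z)$. For exactness at $Z$, I would use that the Ganea map $M(L/Z)\to L'\cap Z$ is surjective, so the image of $M(L/Z)\to Z$ is precisely $L'\cap Z$; on the other hand, an element $z\in Z$ maps to $0$ in $L/L'$ iff $z\in L'$, i.e. iff $z\in L'\cap Z$, so the kernel of $Z\to L/L'$ is also $L'\cap Z$.

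For exactness at $L/L'$, the image of $Z\to L/L'$ is $(Z+L')/L'$ by construction, which is also the kernel of the natural quotient $L/L' \to L/(Z+L')$ by the third isomorphism theorem. Surjectivity of the final map $L/L'\to L/(Z+L')$ is obvious, giving exactness at $L/(Z+L')$.

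I do not expect any serious obstacle: the argument is a diagram-chase that splices the short exact sequence $0 \to L'\cap Z \to Z \to (Z+L')/L' \to 0$ onto the right end of the Ganea sequence and then extends by the short exact sequence $0\to (Z+L')/L' \to L/L' \to L/(Z+L')\to 0$. The only point that requires a small verification is that the composite maps $M(L/Z)\to Z$ and $Z\to L/L'$ are well-defined and linear, which is clear since each is built from inclusions and canonical quotients. The tensor term $L/L'\otimes Z \oplus Z\otimes L/L'$ from the Ganea sequence simply drops off once we truncate on the left at $M(L)$.
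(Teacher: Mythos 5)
Your proposal is correct, but it takes a different route from the paper. The paper does not splice onto the Ganea sequence; instead it returns to the free presentation $L=F/R$, $Z=T/R$ with $FT+TF\subseteq R$, lets the chain of inclusions $R\cap F'\to T\cap F'\to T\to F\to F$ induce homomorphisms between the quotients $\frac{R\cap F'}{FR+RF}\to\frac{T\cap F'}{FT+TF}\to\frac{T}{R}\to\frac{F}{R+F'}\to\frac{F}{T+F'}\to 0$, identifies these quotients with the five terms of the Stallings sequence, and then verifies exactness by four explicit kernel-equals-image computations with cosets. Your argument instead treats the statement as a genuine corollary of Theorem \ref{ganea sequence}: you factor $M(L/Z)\to Z$ through the surjection onto $L'\cap Z$ followed by the inclusion into $Z$, and then splice on the short exact sequences $0\to L'\cap Z\to Z\to (Z+L')/L'\to 0$ and $0\to (Z+L')/L'\to L/L'\to L/(Z+L')\to 0$; each exactness claim then follows formally. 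The two sets of maps agree: under the identifications $M(L/Z)\cong\frac{T\cap F'}{FT+TF}$, $L'\cap Z\cong\frac{(F'+R)\cap T}{R}$, and $Z\cong T/R$, the paper's map $\theta$ induced by $T\cap F'\hookrightarrow T$ is exactly your composite through $L'\cap Z$, and the remaining maps are the same canonical inclusions and projections. What your approach buys is brevity and a clear logical dependence on the Ganea sequence (which is presumably why the statement is labelled a corollary); what the paper's approach buys is an explicit description of every map at the level of the free presentation, which it reuses later (for instance, the interpretation of $\ker\beta$ as $\overline{F}\overline{T}+\overline{T}\overline{F}$ in the proof that condition (2) is equivalent to $Z\subseteq Z^*(L)$). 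Note only that, like the paper's own proof, your argument implicitly inherits the finite-dimensionality hypothesis under which the Ganea sequence and the identification $M(L)\cong\frac{R\cap F'}{FR+RF}$ were established.
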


\begin{proof}
Let $F$ be a free Leibniz algebra such that $L=F/R$ and $Z=T/R$ for ideals $T$ and $R$ of $F$. Then $FT+TF \subseteq R$ since $Z\subseteq Z(L)$. The inclusion maps $R\cap F'\xrightarrow{} T\cap F'\xrightarrow{} T\xrightarrow{} F\xrightarrow{} F$ induce the following sequence of homomorphisms: \[\frac{R\cap F'}{FR+RF} \xrightarrow{\beta} \frac{T\cap F'}{FT+TF} \xrightarrow{\theta} \frac{T}{R}\xrightarrow{\alpha} \frac{F}{R+F'}\xrightarrow{\U} \frac{F}{T+F'}\xrightarrow{\gamma} 0\] To prove exactness for our desired sequence, we make use of the following facts:
\begin{enumerate}
    \item $M(L)\cong \frac{R\cap F'}{FR+RF}$,
    \item $M(L/Z)\cong \frac{T\cap F'}{FT+TF}$,
    \item $Z\cong T/R$,
    \item $\frac{F}{R+F'}\cong L/L'$,
    \item $\frac{F}{T+F'} = \frac{F}{T+F'+R} \cong \frac{(F/R)/(T+F'+R)}{R} \cong \frac{F/R}{T/R + (F'+R)/R} \cong \frac{L}{Z+L'}$.
\end{enumerate}
Thus do the following equalities suffice for this proof: \begin{enumerate}
    \item[i.] $\ker \theta = \{x+ (FT+TF)~|~ x\in T\cap F',~ x\in R\} = \frac{T\cap F'\cap R}{FT+TF} = \frac{R\cap F'}{FT+TF} = \ima \beta$,
    \item[ii.] $\ker \alpha = \{x+R~|~ x\in T,~x\in (R+F')\} = \frac{T\cap (R+F')}{R} = \frac{R+(T\cap F')}{R} = \ima \theta$,
    \item[iii.] $\ker \U = \{x +(R+F')~|~ x\in F,~ x\in (T+F')\} = \frac{F\cap (T+F')}{R+F'} = \frac{T+F'}{R+F'} = \ima \alpha$,
    \item[iv.] $\ker \gamma = \frac{F}{T+F'} = \ima \U$.
\end{enumerate}
\end{proof}

\subsection{The Main Result}
In the previous subsection, we defined maps $\delta$ and $\beta$ that appeared in the extended Hochschild-Serre and Ganea sequences respectively. The latter of these is called the natural map. The following statements, two of which involve these maps, make up the conditions of our four-part theorem.

\begin{enumerate}
    \item $\delta$ is the trivial map,
    \item $\beta$ is injective,
    \item $M(L)\cong \frac{M(L/Z)}{L'\cap Z}$,
    \item $Z\subseteq Z^*(L)$.
\end{enumerate}
The following pair of lemmas shows that the first three are equivalent.

\begin{lem}
Let $Z$ be a central ideal of finite-dimensional Leibniz algebra $L$ and let $\delta:M(L)\xrightarrow{} L/L'\otimes Z\oplus Z\otimes L/L'$ be as in Theorem \ref{batten 4.1}. Then \[M(L)\cong \frac{M(L/Z)}{L'\cap Z}\] if and only if $\delta$ is the trivial map. Here we have identitfied $L'\cap Z$ with its image in $M(L/Z)$.
\end{lem}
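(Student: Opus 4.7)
The strategy is to splice together the Hochschild-Serre sequence from Theorem \ref{HS} and its extension by $\delta$ from Theorem \ref{batten 4.1}, then translate between multipliers and cohomology via the isomorphism $M(L) \cong \cH^2(L,\F)$ established in the previous section. This converts the statement into a clean assertion about the image of the inflation map.

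Concretely, I would apply Theorem \ref{HS} to the natural central extension $0 \to Z \to L \to L/Z \to 0$ to obtain the five-term exact sequence ending with $\Hom(Z,\F) \xrightarrow{\Tra} \cH^2(L/Z,\F) \xrightarrow{\Inf} \cH^2(L,\F)$, and then append the $\delta$ term from Theorem \ref{batten 4.1}. Under the identification $\cH^2 \cong M$, the map $\Inf$ becomes a homomorphism $M(L/Z) \to M(L)$ whose kernel is $\ima \Tra$, and Theorem \ref{if tra surj} identifies $\ima \Tra \cong L' \cap Z$ as a subspace of $M(L/Z)$. This is precisely the identification of $L' \cap Z$ with its image in $M(L/Z)$ mentioned in the statement. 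The first isomorphism theorem then yields a canonical injection $M(L/Z)/(L' \cap Z) \cong \ima \Inf \hookrightarrow M(L)$.

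The extended sequence then gives $\ima \Inf = \ker \delta$, so the image of this injection is exactly $\ker \delta$. Hence the canonical map $M(L/Z)/(L' \cap Z) \to M(L)$ is an isomorphism if and only if $\ker \delta = M(L)$, which is equivalent to $\delta$ being trivial. For the direction starting from only an abstract isomorphism $M(L) \cong M(L/Z)/(L' \cap Z)$, I would invoke finite-dimensionality: matching dimensions, combined with the inclusion $\ima \Inf \subseteq M(L)$, force $\ima \Inf = M(L)$, hence $\delta = 0$.

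I do not expect any serious obstacle; the argument is essentially bookkeeping built on top of Theorems \ref{HS}, \ref{if tra surj}, and \ref{batten 4.1}. The most delicate point is verifying that the identification of $L' \cap Z$ in $M(L/Z)$ used in the lemma statement matches the embedding as $\ima \Tra$ supplied by Theorem \ref{if tra surj}, so that the quotient $M(L/Z)/(L' \cap Z)$ has the intended meaning throughout the argument.
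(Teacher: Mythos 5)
Your proposal is correct and follows essentially the same route as the paper: splice the five-term sequence of Theorem \ref{HS} with the $\delta$-extension of Theorem \ref{batten 4.1}, identify $\ker(\Inf_2)=\ima(\Tra)\cong L'\cap Z$ via Theorem \ref{if tra surj}, and read off $M(L/Z)/(L'\cap Z)\cong\ima(\Inf_2)=\ker\delta$. Your explicit appeal to finite-dimensionality in the converse direction (to pass from an abstract isomorphism $M(L)\cong\ker\delta$ to the equality $\ker\delta=M(L)$) is in fact slightly more careful than the paper's own wording of that step.
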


\begin{proof}
We invoke Theorems \ref{HS} and \ref{batten 4.1}, yielding an exact sequence \begin{alignat*}{2}
    0\xrightarrow{}\Hom(L/Z,\F)\xrightarrow{\Inf_1} \Hom(L,\F)\xrightarrow{\Res} \Hom(Z,\F)&\xrightarrow{\Tra} M(L/Z) \\ &\xrightarrow{\Inf_2} M(L)\xrightarrow{\delta} L/L'\otimes Z\oplus Z\otimes L/L'.
\end{alignat*} In one direction, suppose $\delta$ is the zero map. Then $M(L)\cong \ker \delta \cong \ima(\Inf_2)$. Since \[\ima(\Inf_2)\cong \frac{M(L/Z)}{\ker(\Inf_2)}\] and $\ker(\Inf_2) = \ima(\Tra)\cong L'\cap Z$ by Theorem \ref{if tra surj}, we have \[M(L)\cong \frac{M(L/Z)}{L'\cap Z}.\] Conversely, the isomorphism \[M(L)\cong \frac{M(L/Z)}{L'\cap Z} \cong \frac{M(L/Z)}{\ker(\Inf_2)} \cong \ima(\Inf_2) \cong \ker \delta\] implies that $\delta$ is trivial.
\end{proof}

\begin{lem}
Let $Z$ be a central ideal of a finite-dimensional Leibniz algebra $L$ and let $\beta:M(L)\xrightarrow{} M(L/Z)$ be as in Theorem \ref{ganea sequence}. Then \[M(L)\cong \frac{M(L/Z)}{L'\cap Z}\] if and only if $\beta$ is injective.
\end{lem}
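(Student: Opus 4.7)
The plan is to exploit the Ganea sequence of Theorem \ref{ganea sequence}, specifically the right-hand piece
\[
M(L) \xrightarrow{\beta} M(L/Z) \xrightarrow{\gamma} L' \cap Z \xrightarrow{} 0,
\]
and then pass between the two assertions by a dimension count. Since $L$ is finite-dimensional, each of $M(L)$, $M(L/Z)$, and $L' \cap Z$ is a finite-dimensional abelian Leibniz algebra, hence just a finite-dimensional $\F$-vector space, so isomorphism and equality of dimensions coincide.

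First I would record two consequences of exactness at $M(L/Z)$. From surjectivity of $\gamma$ with $\ker \gamma = \ima \beta$, the first isomorphism theorem yields $M(L/Z)/\ima(\beta) \cong L' \cap Z$, so
\[
\dim(M(L/Z)/(L'\cap Z)) = \dim M(L/Z) - \dim(L'\cap Z) = \dim \ima(\beta).
\]
Combined with the rank-nullity identity $\dim M(L) = \dim \ker \beta + \dim \ima \beta$, this gives $\dim M(L) - \dim(M(L/Z)/(L'\cap Z)) = \dim \ker \beta$.

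For the forward direction, suppose $M(L) \cong M(L/Z)/(L'\cap Z)$. Comparing dimensions forces $\dim \ker \beta = 0$, so $\beta$ is injective. Conversely, if $\beta$ is injective then $\dim \ker \beta = 0$, hence $\dim M(L) = \dim(M(L/Z)/(L'\cap Z))$, and since both sides are abelian Leibniz algebras of the same finite dimension, they are isomorphic.

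The only technical point to confirm is that the subspace being divided out on the right-hand side has the correct dimension; this is handled by the earlier identification (via the Hochschild-Serre sequence and Theorem \ref{if tra surj}) of $L'\cap Z$ with $\ima(\Tra) = \ker(\Inf_2) \subseteq M(L/Z)$, inherited from the preceding lemma's convention. With this in hand, the argument is a clean dimension count and presents no real obstacle.
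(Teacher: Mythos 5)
Your proof is correct and takes essentially the same route as the paper: both arguments rest on exactness of the tail $M(L)\xrightarrow{\beta} M(L/Z)\xrightarrow{} L'\cap Z\xrightarrow{} 0$ of the Ganea sequence together with finite-dimensional linear algebra. The paper phrases this as a chain of isomorphisms ($M(L)\cong\ima\beta\cong\ker\gamma\cong M(L/Z)/(L'\cap Z)$ and its reverse), while you make explicit the rank--nullity dimension count that the paper leaves implicit; the content is the same.
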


\begin{proof}
By Theorem \ref{ganea sequence}, the sequence $M(L)\xrightarrow{\beta} M(L/Z)\xrightarrow{\alpha} L'\cap Z\xrightarrow{\U} 0$ is exact. Suppose $\beta$ is injective. Then $\ker \beta = 0$, which implies that \[M(L)\cong \ima \beta \cong \ker \alpha \cong \frac{M(L/Z)}{\ima \alpha} = \frac{M(L/Z)}{\ker \U} \cong \frac{M(L/Z)}{L'\cap Z}.\] Conversely, the isomorphism \[M(L)\cong \frac{M(L/Z)}{L'\cap Z} \cong \ima \beta\] implies that $\beta$ is injective.
\end{proof}

It remains to show that these conditions are equivalent to $Z\subseteq Z^*(L)$. This will lead to criteria for when $\U(Z(E)) = Z(L)$, where $E$ is any cover of $L$ and $0\xrightarrow{} \ker \U\xrightarrow{} E\xrightarrow{\U} L\xrightarrow{} 0$ is a central extension. Such an extension is called a \textit{stem extension}, i.e., a central extension $0\xrightarrow{} A\xrightarrow{} B\xrightarrow{} C\xrightarrow{} 0$ for which $A\subseteq B'$.

Consider a free presentation $0\xrightarrow{} R\xrightarrow{} F\xrightarrow{\pi} L\xrightarrow{} 0$ of $L$ and let $\overline{X}$ denote the quotient algebra $\frac{X}{FR+RF}$ for any $X$ such that $FR+RF\subseteq X\subseteq F$. Since $R=\ker \pi$ and $FR+RF\subseteq R$, $\pi$ induces a homomorphism $\overline{\pi}:\overline{F}\xrightarrow{} L$ such that the diagram \[\begin{tikzcd}
F\arrow[r,"\pi"]\arrow[d]& L\\
\overline{F} \arrow[ur, swap, "\overline{\pi}"]
\end{tikzcd}\] commutes. Since $\overline{R}\subseteq Z(\overline{F})$, there exists a complement $\frac{S}{FR+RF}$ to $\frac{R\cap F'}{FR+RF}$ in $\frac{R}{FR+RF}$ such that $S\subseteq R\subseteq \ker \pi$ and $\overline{S}\subseteq \overline{R}\subseteq \ker\overline{\pi}$. Thus $\overline{\pi}$ induces a homomorphism $\pi_S:F/S\xrightarrow{} L$ and a central extension $0\xrightarrow{} R/S\xrightarrow{} F/S\xrightarrow{\pi_S} L\xrightarrow{} 0$. This extension is stem since $R/S\cong \frac{R\cap F'}{FR+RF} = \ker \pi_S$ implies that $F/S$ is a cover of $L$.

\begin{lem}\label{center in center}
For every free presentation $0\xrightarrow{} R\xrightarrow{} F\xrightarrow{\pi} L\xrightarrow{} 0$ of $L$ and every central extension $0\xrightarrow{}\ker \U \xrightarrow{} E\xrightarrow{\U} L\xrightarrow{} 0$, one has $\overline{\pi}(Z(\overline{F}))\subseteq \U(Z(E))$.
\end{lem}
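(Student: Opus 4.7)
The plan is to lift the free presentation through the given central extension, factor the lift through $FR+RF$, and then use the centrality of $\ker\U$ to push the center across.

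First, since $F$ is free and $\U:E\to L$ is surjective, I can choose a homomorphism $\sigma:F\to E$ such that $\U\circ \sigma = \pi$. For any $r\in R=\ker\pi$, one has $\U(\sigma(r))=\pi(r)=0$, so $\sigma(R)\subseteq \ker\U\subseteq Z(E)$. Consequently, for $x\in F$ and $r\in R$, $\sigma(xr)=\sigma(x)\sigma(r)=0$ and $\sigma(rx)=\sigma(r)\sigma(x)=0$, so $FR+RF\subseteq \ker\sigma$. This is exactly the argument already used in Lemma \ref{restriction to R}, and it shows $\sigma$ descends to a homomorphism $\overline{\sigma}:\overline{F}\to E$ satisfying $\U\circ\overline{\sigma}=\overline{\pi}$.

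Second, I would show that $\overline{\sigma}$ sends $Z(\overline{F})$ into $Z(E)$. Although a homomorphism need not preserve centers in general, this one does because $\overline{\pi}$ is surjective and $\ker\U$ is central. Given $\overline{x}\in Z(\overline{F})$ and an arbitrary $e\in E$, surjectivity of $\U$ combined with $\U\circ\overline{\sigma}=\overline{\pi}$ lets me write $e=\overline{\sigma}(\overline{y})+k$ for some $\overline{y}\in \overline{F}$ and some $k\in\ker\U\subseteq Z(E)$. Then $\overline{\sigma}(\overline{x})\cdot e=\overline{\sigma}(\overline{x})\cdot\overline{\sigma}(\overline{y})+\overline{\sigma}(\overline{x})\cdot k=\overline{\sigma}(\overline{x}\cdot\overline{y})+0=0$, using $\overline{x}\in Z(\overline{F})$, and the symmetric computation gives $e\cdot\overline{\sigma}(\overline{x})=0$. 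Hence $\overline{\sigma}(\overline{x})\in Z(E)$.

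Combining the two steps, for any $\overline{x}\in Z(\overline{F})$ we have $\overline{\pi}(\overline{x})=\U(\overline{\sigma}(\overline{x}))\in \U(Z(E))$, which is the desired inclusion $\overline{\pi}(Z(\overline{F}))\subseteq \U(Z(E))$.

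The only delicate point is the verification that $\overline{\sigma}$ preserves centrality, and that hinges entirely on $\ker\U$ being central in $E$, so no additional information about $E$ or $\overline{F}$ is needed beyond what the hypotheses already provide.
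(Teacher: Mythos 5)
Your proposal is correct and follows essentially the same route as the paper: the paper obtains your $\overline{\sigma}$ by citing Lemma \ref{restriction to R} directly, and your decomposition $e=\overline{\sigma}(\overline{y})+k$ with $k\in\ker\U$ is exactly its claim that $E=\ker\U+\beta(\overline{F})$, after which the centrality computation is identical. (One trivial slip: the decomposition uses surjectivity of $\overline{\pi}$, not of $\U$, but you invoke the former earlier in the same paragraph, so nothing is lost.)
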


\begin{proof}
Since the identity map $\text{id}:L\xrightarrow{} L$ is a homomorphism, we can invoke Lemma \ref{restriction to R}, yielding a homomorphism $\beta:\overline{F}\xrightarrow{} E$ such that the diagram \[\begin{tikzcd}
0\arrow[r]& \frac{R}{FR+RF}\arrow[r] \arrow[d, "\gamma"] & \frac{F}{FR+RF} \arrow[r, "\overline{\pi}"] \arrow[d,"\beta"] & L\arrow[r] \arrow[d, "\text{id}"] &0 \\
0\arrow[r] &\ker \U \arrow[r] & E\arrow[r, "\U"] &L\arrow[r] &0
\end{tikzcd}\] is commutative (where $\gamma$ is the restriction of $\beta$ to $\overline{R}$).

Let $A=\ker \U$. Our first claim is that $E=A+\beta(\overline{F})$. Indeed, let $e\in E$. Then $\U(e) = \overline{\pi}(f)$ for some $f\in \overline{F}$, and so $\U(e)= \U\circ\beta(f)$ by diagram commutativity. This implies that $e-\beta(f)\in \ker \U = A$, meaning $e-\beta(f) =a$ for some $a\in A$. Thus $e=a+\beta(f)$.

Our second claim is that $\beta(Z(\overline{F}))$ centralizes both $A$ and $\beta(\overline{F})$. To see this, one first computes $\beta(Z(\overline{F}))\beta(\overline{F}) = \beta(Z(\overline{F})\overline{F}) = \beta(0) = 0$ and $\beta(\overline{F})\beta(Z(\overline{F})) = \beta(\overline{F}Z(\overline{F})) = \beta(0) = 0$. Next, we know that $AE$ and $EA$ are both zero, and so $A\beta(Z(\overline{F}))$ and $\beta(Z(\overline{F}))A$ are zero as well. But this implies that $\beta(Z(\overline{F}))$ centralizes $E$ by the first claim. Hence $\beta(Z(\overline{F}))\subseteq Z(E)$ and $\U\circ \beta(Z(\overline{F}))\subseteq \U(Z(E))$, which yields $\overline{\pi}(Z(\overline{F}))\subseteq \U(Z(E))$.
\end{proof}

\begin{thm}\label{batten 4.7}
For every free presentation $0\xrightarrow{} R\xrightarrow{} F\xrightarrow{\pi} L\xrightarrow{} 0$ of $L$ and every stem extension $0\xrightarrow{} \ker \U\xrightarrow{} E\xrightarrow{\U} L\xrightarrow{} 0$, one has $Z^*(L) = \overline{\pi}(Z(\overline{F})) = \U(Z(E))$.
\end{thm}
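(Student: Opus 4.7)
The plan is to prove the two equalities $Z^*(L) = \overline{\pi}(Z(\overline{F}))$ and $\omega(Z(E)) = \overline{\pi}(Z(\overline{F}))$ separately: the first does not mention $E$, while the second exploits the stem structure.

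For $Z^*(L) = \overline{\pi}(Z(\overline{F}))$, Lemma \ref{center in center} applied to an arbitrary central extension $0 \to \tilde{A} \to \tilde{E} \xrightarrow{\tilde{\omega}} L \to 0$ gives $\overline{\pi}(Z(\overline{F})) \subseteq \tilde{\omega}(Z(\tilde{E}))$, and intersecting over all such extensions yields $\overline{\pi}(Z(\overline{F})) \subseteq Z^*(L)$. The canonical sequence $0 \to \overline{R} \to \overline{F} \xrightarrow{\overline{\pi}} L \to 0$ is itself a central extension participating in the intersection defining $Z^*(L)$, so $Z^*(L) \subseteq \overline{\pi}(Z(\overline{F}))$, giving equality.

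For $\omega(Z(E)) = \overline{\pi}(Z(\overline{F}))$ with $E$ stem, the $\supseteq$ direction is again Lemma \ref{center in center}. For the reverse, apply Lemma \ref{restriction to R} with $\alpha = \text{id}_L$ to obtain $\beta: \overline{F} \to E$ with $\omega \circ \beta = \overline{\pi}$. From the proof of Lemma \ref{center in center}, $E = \ker\omega + \beta(\overline{F})$; centrality of $\ker\omega$ makes cross-terms in products vanish, so $E' = \beta(\overline{F}')$, and the stem hypothesis $\ker\omega \subseteq E'$ then forces $\ker\omega \subseteq \beta(\overline{F})$, making $\beta$ surjective. Identifying $E \cong \overline{F}/K$ with $K = \ker\beta$ a central ideal of $\overline{F}$ contained in $\overline{R}$, the stem condition translates to $\overline{R} = \overline{F' \cap R} + K$.

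For any $z = \beta(f) \in Z(E)$, one has $f\overline{F} + \overline{F}f \subseteq K$, and since each such product automatically lies in $\overline{F}'$, in fact $f\overline{F} + \overline{F}f \subseteq K \cap \overline{F' \cap R}$. I expect the main obstacle to be promoting this containment to the equality $fg = 0 = gf$, which would give $f \in Z(\overline{F})$ and hence $\omega(z) = \overline{\pi}(f) \in \overline{\pi}(Z(\overline{F}))$. When $E$ is a cover, $K$ may be chosen as a complement of $\overline{F' \cap R}$ in $\overline{R}$, so $K \cap \overline{F' \cap R} = 0$ and the conclusion is automatic; for a general stem extension, the universal property of covers (Theorem \ref{batten 1.12}) should permit factoring $\beta$ through a cover of $L$, transferring the centrality conclusion from the cover back to $E$.
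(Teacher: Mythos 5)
Your first equality $Z^*(L)=\overline{\pi}(Z(\overline{F}))$ is exactly the paper's argument. For the second equality, your setup is correct as far as it goes: $\beta$ is surjective, $K=\ker\beta$ is a central ideal of $\overline{F}$ contained in $\overline{R}$, the stem condition is equivalent to $\overline{R}=\overline{F'\cap R}+K$, and for $z=\beta(f)\in Z(E)$ one gets $f\overline{F}+\overline{F}f\subseteq K\cap\overline{F'\cap R}$. When $E$ is a cover this intersection does vanish, though not because $K$ ``may be chosen'' as a complement ($K$ is $\ker\beta$, hence determined): rather, $\overline{R}/K\cong\ker\U\cong M(L)\cong\overline{F'\cap R}$ together with $\overline{R}=\overline{F'\cap R}+K$ forces $K\cap\overline{F'\cap R}=0$ because $M(L)$ is finite-dimensional. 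With that repair, your argument for covers is a somewhat more direct route than the paper's, which instead passes to the particular cover $F/S$, lets $T$ be the preimage of $Z(F/S)$ in $F$, and proves $\overline{T}=Z(\overline{F})$ using $\overline{S}\cap(\overline{R}\cap\overline{F}')=0$ --- the same vanishing intersection in different clothing.

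The genuine gap is precisely the step you flagged: arbitrary stem extensions. The proposed fix of factoring $\beta$ through a cover cannot succeed, because the statement is false for arbitrary stem extensions: $0\to 0\to L\xrightarrow{\mathrm{id}} L\to 0$ is a stem extension, so the claim as literally stated would force $Z^*(L)=Z(L)$ for every $L$, which fails already for the three-dimensional Heisenberg algebra (its unique cover $E$ has $Z(E)$ mapping to $0$, so $Z^*=0\subsetneq Z$). Concretely, when $K$ meets $\overline{F'\cap R}$ nontrivially your containment $f\overline{F}+\overline{F}f\subseteq K\cap\overline{F'\cap R}$ genuinely cannot be improved to zero, and mapping a cover $C$ onto $E$ only relocates the obstruction, since central elements of $E$ need not lift to central elements of $C$. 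Note that the paper's own proof makes the same silent restriction: the reduction ``since the cover $F/S$ is unique up to isomorphism, it suffices to show $\pi_S(Z(F/S))=\overline{\pi}(Z(\overline{F}))$'' is valid only when $E$ is itself a cover. So the theorem should be read as a statement about stem covers, and your completed portion proves exactly what the paper's proof proves.
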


\begin{proof}
By Lemma \ref{center in center}, $\overline{\pi}(Z(\overline{F}))$ is contained in $\U'(Z(E'))$ for every central extension $0\xrightarrow{} \ker \U'\xrightarrow{} E'\xrightarrow{\U'} L\xrightarrow{} 0$ of $L$. Thus $\overline{\pi}(Z(\overline{F}))\subseteq \U(Z(E))$ for our stem extension. We also know that $Z^*(L)$ is the intersection of all images $\U'(Z(E'))$, and that $\overline{\pi}(Z(\overline{F}))$ is one of these images since $0\xrightarrow{} \overline{R}\xrightarrow{} \overline{F} \xrightarrow{\overline{\pi}} L\xrightarrow{} 0$ is central. Therefore $\overline{\pi}(Z(\overline{F})) = Z^*(L)$. Since this equality holds for all $F$, we can assume that $0\xrightarrow{} R/S\xrightarrow{} F/S\xrightarrow{\pi_S} L\xrightarrow{} 0$ is a stem extension where $S$ is defined as above. Since the cover $F/S$ is unique up to isomorphism, it now suffices to show that $\pi_S(Z(F/S)) = \overline{\pi}(Z(\overline{F}))$.

Let $T$ be the inverse image of $Z(F/S)$ in $F$ and consider the commutative diagram \[\begin{tikzcd}
F\arrow[r, "\pi_3"]\arrow[d,swap, "\pi_1"]& F/S\\
\overline{F} \arrow[ur, swap, "\pi_2"]
\end{tikzcd}\] where all mappings are the natural ones. Then $\overline{T} = \pi_1(T)$ by definition and $\pi_2(\overline{T}) = \pi_2\circ\pi_1(T) = \pi_3(T) = Z(F/S)$, yielding the diagram \[\begin{tikzcd}
T\arrow[r, "\pi_3"]\arrow[d,swap, "\pi_1"]& Z(F/S)\\
\overline{T} \arrow[ur, swap, "\pi_2"]
\end{tikzcd}\] where all maps denote their restrictions. Now let $x\in Z(\overline{F})$. Then $\pi_2(x)\in Z(F/S)$, which implies that there exists $y\in T$ such that $\pi_3(y) = \pi_2(x)$. The resulting equality $\pi_2\circ \pi_1(y) = \pi_2(x)$ yields an element $\pi_1(y)-x\in \ker \pi_2 = \overline{S} \subseteq \overline{T}$, where $\overline{S}\subseteq \overline{T}$ since $S\subseteq T$. Therefore $x\in \overline{T}$ and $Z(\overline{F})\subseteq \overline{T}$. For the reverse inclusion, we first note that $T/S = Z(F/S)$, and so $FT+TF \subseteq S$. Thus $\overline{F}\overline{T} + \overline{T}\overline{F}\subseteq \overline{S}$. Also $\overline{F}\overline{T} + \overline{T}\overline{F}\subseteq \overline{R}$ since $\overline{S}\subseteq \overline{R}$ and $\overline{F}\overline{T} + \overline{T}\overline{F}\subseteq \overline{F}'$ by definition. Hence $\overline{F}\overline{T} + \overline{T}\overline{F}\subseteq \overline{S}\cap (\overline{R}\cap\overline{F}') = \overline{0}$ which implies that $\overline{T} \subseteq Z(\overline{F})$ and thus $\overline{T} = Z(\overline{F})$. Thus, the commutative diagram \[\begin{tikzcd}
F\arrow[r, "\pi"]\arrow[d]& L\\
\overline{F} \arrow[ur, "\overline{\pi}"]\arrow[d] \\ F/S\ar[uur, swap, "\pi_S"]
\end{tikzcd}\] yields the equality $\overline{\pi}(Z(\overline{F})) = \overline{\pi}(\overline{T}) = \pi_S(T/S) = \pi_S(Z(F/S))$ by the definition of $T$.
\end{proof}

\begin{lem}
Let $Z$ be a central ideal of finite-dimensional Leibniz algebra $L$ and let $\beta:M(L)\xrightarrow{} M(L/Z)$ be as in Theorem \ref{ganea sequence}. Then $Z\subseteq Z^*(L)$ if and only if $\beta$ is injective.
\end{lem}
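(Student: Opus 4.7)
My plan is to choose a free presentation $0 \to R \to F \to L \to 0$ with $Z = T/R$ (so $T = \pi^{-1}(Z)$), compute $\ker\beta$ directly, and then invoke Theorem \ref{batten 4.7} to relate the vanishing of $\ker\beta$ to the inclusion $Z \subseteq Z^*(L)$. Writing $\overline{X} = X/(FR+RF)$ as in the text, the map $\beta$ is induced by the inclusion $R \cap F' \hookrightarrow T \cap F'$ composed with passage to $\frac{T \cap F'}{FT+TF}$, so
\[
\ker\beta \;=\; \frac{(R \cap F') \cap (FT+TF)}{FR+RF}.
\]
Because $Z$ is central, $FT+TF \subseteq R$, and trivially $FT+TF \subseteq F'$, so this simplifies to $\ker\beta = \frac{FT+TF}{FR+RF}$. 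Hence $\beta$ is injective if and only if $FT+TF \subseteq FR+RF$, i.e.\ $\overline{F}\,\overline{T} + \overline{T}\,\overline{F} = 0$ in $\overline{F}$, which is equivalent to $\overline{T} \subseteq Z(\overline{F})$.

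Next I would use Theorem \ref{batten 4.7} to identify $Z^*(L) = \overline{\pi}(Z(\overline{F}))$. Let $T^* = \pi_1^{-1}(Z(\overline{F}))$, where $\pi_1 : F \to \overline{F}$ is the quotient map; then $\overline{T^*} = Z(\overline{F})$ and $Z^*(L) = \pi(T^*)$. Since the extension $\overline{F} \to L$ is central, $\overline{R} \subseteq Z(\overline{F}) = \overline{T^*}$, so $R \subseteq T^*$ and consequently $T^* + R = T^*$. Therefore
\[
Z \subseteq Z^*(L) \;\Longleftrightarrow\; \pi(T) \subseteq \pi(T^*) \;\Longleftrightarrow\; T \subseteq T^* + R = T^* \;\Longleftrightarrow\; \overline{T} \subseteq \overline{T^*} = Z(\overline{F}).
\]
Combining this chain of equivalences with the first paragraph yields $Z \subseteq Z^*(L) \iff \overline{T} \subseteq Z(\overline{F}) \iff \ker\beta = 0$, completing the proof.

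The only subtle step is the passage from the image-equality $\pi(T) \subseteq \pi(T^*)$ to the set-level containment $T \subseteq T^*$; this is where Theorem \ref{batten 4.7} does the crucial work by giving us a \emph{specific} ideal $T^*$ whose image is $Z^*(L)$ and which already contains the kernel $R$, so no additional ``mod $R$'' ambiguity remains. Everything else is a direct bookkeeping calculation with $FR+RF$ and $FT+TF$, and I do not anticipate any further technical obstacle.
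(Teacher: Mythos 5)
Your proposal is correct and follows essentially the same route as the paper: identify $\ker\beta$ with $\frac{FT+TF}{FR+RF}=\overline{F}\,\overline{T}+\overline{T}\,\overline{F}$ from the Ganea sequence construction, observe this vanishes exactly when $\overline{T}\subseteq Z(\overline{F})$, and then use Theorem \ref{batten 4.7}'s identification $Z^*(L)=\overline{\pi}(Z(\overline{F}))$ to convert that containment into $Z\subseteq Z^*(L)$. The only difference is presentational: you make explicit, via the preimage $T^*$ and the fact that $R\subseteq T^*$, the step the paper leaves implicit when it cites ``the proof of Theorem \ref{batten 4.7}.''
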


\begin{proof}
In the proof of the Ganea sequence, we saw that $\ker \beta$ can be interpreted as $\overline{F}\overline{T} + \overline{T}\overline{F}$. If $\beta$ is injective, then $\overline{F}\overline{T} + \overline{T}\overline{F} = 0$, which implies that $\overline{T}\subseteq Z(\overline{F})$. By the proof of Theorem \ref{batten 4.7}, $Z\subseteq Z^*(L)$. Conversely, if $Z\subseteq Z^*(L)$, then $\overline{T}\subseteq Z(\overline{F})$, which implies that $\overline{F}\overline{T} + \overline{T}\overline{F} = 0$. Thus $\ker \beta=0$ and $\beta$ is injective.
\end{proof}

\begin{thm}\label{batten 4.9}
Let $Z$ be a central ideal of a finite-dimensional Leibniz algebra $L$ and \[\delta:M(L)\xrightarrow{} L/L'\otimes Z\oplus Z\otimes L/L'\] be as in Theorem \ref{batten 4.1}. Then the following are equivalent:
\begin{enumerate}
    \item $\delta$ is the trivial map,
    \item the natural map $\beta$ is injective,
    \item $M(L)\cong \frac{M(L/Z)}{L'\cap Z}$,
    \item $Z\subseteq Z^*(L)$.
\end{enumerate}
\end{thm}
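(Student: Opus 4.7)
The plan is very short: Theorem \ref{batten 4.9} is essentially a bookkeeping statement that collects the three biconditional lemmas just proven into a single four-part equivalence. No new construction or calculation is required; each of the preceding lemmas links one of conditions (1), (2), (4) to condition (3), so chaining them together closes the cycle.

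Concretely, I would proceed as follows. First, invoke the lemma that $M(L)\cong M(L/Z)/(L'\cap Z)$ if and only if $\delta$ is trivial; this gives (1) $\Leftrightarrow$ (3). Next, invoke the lemma that $M(L)\cong M(L/Z)/(L'\cap Z)$ if and only if the natural map $\beta$ is injective; this gives (2) $\Leftrightarrow$ (3). Finally, invoke the lemma that $Z\subseteq Z^*(L)$ if and only if $\beta$ is injective; this gives (2) $\Leftrightarrow$ (4). Combining, we obtain (1) $\Leftrightarrow$ (3) $\Leftrightarrow$ (2) $\Leftrightarrow$ (4), so the four conditions are pairwise equivalent.

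There is no real obstacle, since the work has already been distributed across the three lemmas: the cohomological content lives in the extended Hochschild-Serre sequence of Theorem \ref{batten 4.1}, the $K$-theoretic content lives in the Ganea sequence of Theorem \ref{ganea sequence}, and the connection to $Z^*(L)$ was extracted in Theorem \ref{batten 4.7} via the stem-extension analysis of the cover $F/S$. The role of Theorem \ref{batten 4.9} is purely to package these pieces as the extension-theoretic crossroads advertised in the introduction, and the proof will consist of one or two sentences citing the three lemmas.
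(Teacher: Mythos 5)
Your proposal is correct and matches the paper exactly: the paper states Theorem \ref{batten 4.9} with no separate proof, precisely because the three preceding lemmas supply (1) $\Leftrightarrow$ (3), (2) $\Leftrightarrow$ (3), and (2) $\Leftrightarrow$ (4), and chaining them is all that is needed. Your one- or two-sentence packaging argument is the intended one.
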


We conclude this section by narrowing our focus to when the conditions of Theorem \ref{batten 4.9} hold for $Z=Z(L)$.

\begin{thm}
Let $L$ be a Leibniz algebra and $Z(L)$ be the center of $L$. If $Z(L)\subseteq Z^*(L)$, then $\U(Z(E)) = Z(L)$ for every stem extension $0\xrightarrow{} \ker \U\xrightarrow{} E\xrightarrow{\U} L\xrightarrow{} 0$.
\end{thm}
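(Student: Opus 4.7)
The plan is to observe that this theorem is essentially immediate from Theorem \ref{batten 4.7} together with the standing inclusion $Z^*(L) \subseteq Z(L)$ noted at the very beginning of Section \ref{batten 4}.

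First I would invoke Theorem \ref{batten 4.7}, which asserts that for every stem extension $0 \xrightarrow{} \ker \U \xrightarrow{} E \xrightarrow{\U} L \xrightarrow{} 0$, one has the equality $\U(Z(E)) = Z^*(L)$. This is exactly the nontrivial content being imported, and it is precisely the reason we needed Lemma \ref{center in center} and the subsequent identification $\overline{T} = Z(\overline{F})$ in the main argument.

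Next, I would combine this with the two inclusions that pin $Z^*(L)$ down from both sides: the trivial inclusion $Z^*(L) \subseteq Z(L)$, which follows immediately from the definition of $Z^*(L)$ as an intersection of images $\U'(Z(E'))$ each of which lands in $Z(L)$, together with the hypothesis $Z(L) \subseteq Z^*(L)$ of the theorem. These force $Z^*(L) = Z(L)$.

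Chaining these two facts then gives $\U(Z(E)) = Z^*(L) = Z(L)$ for every stem extension of $L$, which is the desired conclusion. There is no real obstacle here; the substantive work has already been absorbed into Theorem \ref{batten 4.7}, and this statement is the specialization to $Z = Z(L)$ that was promised at the end of the preceding discussion.
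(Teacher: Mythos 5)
Your proposal is correct and matches the paper's argument in substance: both reduce the theorem to pinching $Z^*(L)$ between the hypothesis $Z(L)\subseteq Z^*(L)$ and the trivial reverse inclusion. The only cosmetic difference is that the paper uses the definitional sandwich $Z^*(L)\subseteq \U(Z(E))\subseteq Z(L)$ directly, whereas you import the equality $\U(Z(E)) = Z^*(L)$ from Theorem \ref{batten 4.7}; either route closes the argument immediately.
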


\begin{proof}
By definition, $Z^*(L)\subseteq \U(Z(E)) \subseteq Z(L)$ for any stem extension $0\xrightarrow{} \ker \U\xrightarrow{} E\xrightarrow{\U} L\xrightarrow{} 0$. By hypothesis, $Z(L)\subseteq Z^*(L)$. Therefore $Z^*(L) = \U(Z(E)) = Z(L)$.
\end{proof}

\section*{Acknowledgements}
The author would like to thank Ernest Stitzinger for the many helpful discussions.

\end{document}